\newcommand\subsubsubsection{\@startsection{paragraph}{4}{\z@}{-1ex\@plus 0ex \@minus 2ex}{-1ex \@plus 2ex\@minus 2ex}{\normalfont\normalsize\itshape}}
\newcommand\subsubsubsubsection{\@startsection{subparagraph}{5}{\z@}{-2.5ex\@plus -1ex \@minus -.25ex}{1.25ex \@plus .25ex}{\normalfont\normalsize\itseries}}
\def\csname r@tocindent4\endcsname{0pt}
\let\oldtocsection=\tocsection
\let\oldtocsubsection=\tocsubsection
\let\oldtocsubsubsection=\tocsubsubsection
\let\oldtocparagraph=\tocparagraph
\let\oldtocsubparagraph=\tocsubparagraph
\renewcommand{\tocsection}[2]{\hspace{0em}\oldtocsection{#1}{#2}}
\renewcommand{\tocsubsection}[2]{\hspace{1em}\oldtocsubsection{#1}{#2}}
\renewcommand{\tocsubsubsection}[2]{\hspace{2em}\oldtocsubsubsection{#1}{#2}}
\renewcommand{\tocparagraph}[2]{\hspace{3em}\oldtocparagraph{#1}{#2}}
\renewcommand{\tocsubparagraph}[2]{\hspace{4em}\oldtocsubparagraph{#1}{#2}}
\newcommand{\eq}[2]{\begin{equation}\label{#1}#2 \end{equation}}
\newtheorem{lemma}{Lemma}[section]
\newtheorem{theorem}[lemma]{Theorem}
\newtheorem{cor-intro}{Corollary}
\newtheorem{prop}[lemma]{Proposition}
\newtheorem{proposition}[lemma]{Proposition}
\newtheorem{corollary}[lemma]{Corollary}
\theoremstyle{definition}
\newtheorem{definition}[lemma]{Definition}
\newtheorem{para}[lemma]{}
\newtheorem{example}[lemma]{Example}
\newtheorem{remark}[lemma]{Remark}
\newtheorem{qn}[lemma]{Question}
\theoremstyle{remark}
\newtheorem*{claim*}{Claim}
\newcounter{zaehler}
\numberwithin{equation}{subsection}
\renewcommand{\phi}{\varphi}
\newcommand{\Z}{\mathbb{Z}}
\renewcommand{\P}{\mathbf{P}}
\newcommand{\A}{\mathbf{A}}
\newcommand{\rmv}{\mathrm{v}}
\newcommand{\cO}{\mathcal{O}}
\newcommand{\cM}{\mathcal{M}}
\newcommand{\Cone}{\operatorname{Cone}}
\newcommand{\ul}[1]{{\underline{#1}}}
\newcommand{\Tr}{\operatorname{Tr}}
\newcommand{\Nm}{\operatorname{Nm}}
\newcommand{\Spec}{\operatorname{Spec}}
\newcommand{\Proj}{\operatorname{Proj}}
\newcommand{\Zar}{{\operatorname{Zar}}}
\newcommand{\Nis}{{\operatorname{Nis}}}
\newcommand{\et}{{\operatorname{\acute{e}t}}}
\newcommand{\inj}{\hookrightarrow}
\newcommand{\res}{\operatorname{res}}
\newcommand{\id}{{\operatorname{id}}}
\newcommand{\CH}{{\operatorname{CH}}}
\newcommand{\ord}{{\rm ord}}
\newcommand*\circled[1]{\tikz[baseline=(char.base)]{
            \node[shape=circle,draw,inner sep=2pt] (char) {#1};}}
\newcommand{\leftrarrows}{\mathrel{\raise.75ex\hbox{\oalign{%
  $\scriptstyle\leftarrow$\cr
  \vrule width0pt height.5ex$\hfil\scriptstyle\relbar$\cr}}}}
\newcommand{\lrightarrows}{\mathrel{\raise.75ex\hbox{\oalign{%
  $\scriptstyle\relbar$\hfil\cr
  $\scriptstyle\vrule width0pt height.5ex\smash\rightarrow$\cr}}}}
\newcommand{\Rrelbar}{\mathrel{\raise.75ex\hbox{\oalign{%
  $\scriptstyle\relbar$\cr
  \vrule width0pt height.5ex$\scriptstyle\relbar$}}}}
\def\leftrightarrowsfill@{\arrowfill@\leftrarrows\Rrelbar\lrightarrows}
\newcommand{\xleftrightarrows}[2][]{\ext@arrow 3399\leftrightarrowsfill@{#1}{#2}}
\newcommand{\pr}{{\mathrm{pr}}}
\renewcommand{\div}{\mathrm{div}}
\newcommand{\cyc}{{\mathrm{cyc}}}
\newcommand{\ra}{\rightarrow}
\renewcommand{\hat}{\widehat}
\renewcommand{\tilde}{\widetilde}
\newcommand{\xra}{\xrightarrow}
\newcommand{\hra}{\hookrightarrow}
\renewcommand{\bar}{\overline}
\newcommand{\bthm}{\begin{theorem}}
\newcommand{\ethm}{\end{theorem}}
\newcommand{\bcor}{\begin{corollary}}
\newcommand{\ecor}{\end{corollary}}
\newcommand{\bprop}{\begin{proposition}}
\newcommand{\eprop}{\end{proposition}}
\newcommand{\ble}{\begin{lemma}}
\newcommand{\ele}{\end{lemma}}
\newcommand{\bex}{\begin{example}}
\newcommand{\eex}{\end{example}}
\newcommand{\bexc}{\begin{exercise}}
\newcommand{\eexc}{\end{exercise}}
\newcommand{\brmk}{\begin{remark}}
\newcommand{\ermk}{\end{remark}}
\newcommand{\bdefn}{\begin{definition}}
\newcommand{\edefn}{\end{definition}}
\newcommand{\bpf}{\begin{proof}}
\newcommand{\epf}{\end{proof}}
\newcommand{\benu}{\begin{enumerate}}
\newcommand{\eenu}{\end{enumerate}}
\newcommand{\bit}{\begin{itemize}}
\newcommand{\eit}{\end{itemize}}
\newcommand{\bqn}{\begin{qn}}
\newcommand{\eqn}{\end{qn}}
\newcommand{\beq}{\begin{equation}}
\newcommand{\eeq}{\end{equation}}
\begin{document}

\title{Additive cycle complex and coherent duality}
\author{Fei Ren}
\maketitle
\begin{abstract}
Let $k$ be a field of positive characteristic $p$, and $X$ be a separated of finite type $k$-scheme of dimension $d$.
We construct a cycle map from the additive cycle complex to the residual complex of Serre-Grothendieck coherent duality theory.
This map is compatible with a cubical version of the map constructed in \cite{RenThesis} when $k$ is perfect.
As a corollary, we get injectivity statements for (additive) higher Chow groups as well as  for motivic cohomology (with modulus) with $\Z/p$ coefficients when $k$ is algebraically closed.
\end{abstract}
\tableofcontents

\section{Introduction}

This manuscript grows out of an attempt to extend the results in \cite{RenThesis} to the additive cycle complex settings. 
Let $X$ be a separated scheme of dimension $d$ of finite type over a perfect field $k$ of positive characteristic $p$.
Let $\pi:X\ra k$ be the structure map.
In \textit{loc. cit.}, we constructed an explicit chain map from Bloch's cycle complex $\Z^c_X$ of zero cycles to Kato's complex $K_{X,log}$, and showed that this map is a quasi-isomorphism in the \'etale topology if we mod the former complex by $p$. Kato's complex $K_{X,log}$ that we used over there, first appeared in \cite{KatoII}, comes from the Serre-Grothendieck duality theory for coherent sheaves. 
It is the $[-1]$-shifted mapping cone of $C-1$, where $C$ is the Cartier operator of an explicit complex of injectives $K_{X}$ representing $\pi^!k$.
In particular, we have a canonical chain map
$$K_{X,log}:=\Cone(K_X\xra{C-1} K_X)[-1]\ra K_{X}$$
which is at degree $-q$ given by $K_X^{-q}\oplus K_X^{-q-1}\ra K_X^{-q}, (a,b)\mapsto a$.
On the other hand, 
the (simplicial) cycle complex $\Z^c_X$ of Bloch is abstractly quasi-isomorphic to the cubical cycle complex $\Z_X^{\rm cube}$ by \cite[Theorem 4.7]{Levine_revisit} (See also \cite[Theorem 4.3]{Bloch_Notes}), and
for any integer $M\ge 2$, 
the closed immersion $i_{1}: X=X\times\{1\}\hra X\times \A^1$ induces a chain map
$$i_{1*}: \Z_X^{\rm cube} =\Z(d)_{X|0}[2d]\ra \Z(d+1)_{X\times \A^1 \mid M\cdot(X\times \{0\})}[2d+2]$$
by \cite[Lemma 2.7(1)]{BS19}.
Here $ \Z_{X\times \A^1 \mid M\cdot(X\times \{0\})}$ is the additive cycle complex introduced by Bloch-Esnault and Park, and we use the more general notation of the cycle complex with modulus for the modulus pair
$(X\times \A^1, M\cdot(X\times \{0\}))$.
(For a recollection of the construction of the cycle complex with modulus, see \Cref{Rec}.)
So a natural question is, whether there exists a chain map
$$\Z(d+1)_{X\times \A^1 \mid M\cdot(X\times \{0\})}[2d+2]\ra K_X$$
which is compatible with (the cubical version
\footnote{For this one just has to replace Zhong's map $\psi$ in \cite[\S6.1]{RenThesis} with a cubical version $\varphi'$ which is a sign adapted version of the map $\varphi$ in \cite[Lemma 3.1]{RS18}. See \Cref{phiprime}.} 
of) the chain map  defined in \cite{RenThesis}.
The main result of this article gives an affirmative answer to this question.
\begin{theorem}[{\Cref{thm}}, \Cref{Compatibility}]
Let $M\ge 2$ be an integer.
There is a chain map
$$\alpha:\Z(d+1)_{X\times \A^1 \mid M\cdot(X\times \{0\})}[2d+2]\ra K_X$$
between complexes of \'etale sheaves, such that  the following diagram commutes
$$\xymatrix{
\Z^{\rm cube}_X
\ar[r]^(0.3){i_{1*}}
\ar[d]_{\zeta\circ \varphi'}&
\Z(d+1)_{X\times \A^1 \mid M(X\times \{0\})}[2d+2]
\ar[d]^{\alpha}
\\
K_{X,log}
\ar[r]^{(a,b)\mapsto a}
&K_X.
}$$
\end{theorem}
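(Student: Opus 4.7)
The plan is to construct $\alpha$ cycle-by-cycle via an explicit coherent residue formula, mirroring the construction of $\varphi'$ in \cite{RenThesis} but augmented by a weight factor $t^{-M}$ that carries the $\A^1$-direction and the modulus data. Chain-map and diagram compatibility will then be verified through local residue identities.

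For the construction, in cohomological degree $-n$ the source consists of codimension-$(d+1)$ admissible cycles $Z$ on $(X\times\A^1)\times\square^n$ satisfying the modulus condition relative to $M\cdot(X\times\{0\})$; these are equidimensional of dimension $n$. The target $K_X^{-n}$ is the direct sum over dimension-$n$ points of $X$ of injective hulls of residue fields. For an integral $Z$ whose image $\pi(Z)\subset X$ under projection has dimension $n$ (the only contributing case), let $t$ denote the $\A^1$-coordinate and $y_1,\ldots,y_n$ the cubical coordinates. Define $\alpha([Z])$ to be the trace along the finite extension $\kappa(\eta_Z)/\kappa(\pi(\eta_Z))$ of the rational $n$-form
\[
\frac{1}{t^M}\,d\log y_1\wedge\cdots\wedge d\log y_n\in\Omega^n_{\kappa(\eta_Z)/k},
\]
placed at the generic point of $\pi(Z)$ inside $K_X^{-n}$. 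The modulus condition ensures that after pullback to the normalization $Z^N$, the poles of $t^{-M}$ along branches above $\{t=0\}$ are compensated by the vanishing of the $d\log y_i$'s there, so the traced form is a legitimate element of the injective hull. Extend by linearity to all cycles.

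For the chain-map property, verify that the cubical differential on the source (alternating sum of face-intersections with $\{y_i=\epsilon\}$, $\epsilon\in\{0,\infty\}$) matches the Cousin differential on the target (alternating sum of residues along codimension-$1$ specializations). This reduces to the local identity $\Res_{y_i=\epsilon}(d\log y_i\wedge\omega)=\pm\omega$, holding verbatim from \cite{RenThesis}, since $t^{-M}$ is inert under residues along faces transverse to the $\A^1$-direction and the modulus condition is preserved under face restriction. For the diagram commutativity, take $W\in z^d(X\times\square^n)$; the cycle $i_{1*}W=W\times\{1\}$ has $t\equiv 1$ on its generic point, so $1/t^M=1$, and the formula for $\alpha(i_{1*}W)$ reduces exactly to the formula for $\varphi'(W)$. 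Since $\zeta\circ\varphi'(W)\in K_{X,\log}^{-n}=K_X^{-n}\oplus K_X^{-n-1}$ has $\varphi'(W)$ as its first component, the projection $(a,b)\mapsto a$ produces $\alpha(i_{1*}W)$, as required.

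The main obstacle is verifying well-definedness of $\alpha$ on cycles whose normalization $Z^N$ has non-trivial branch structure above $\{t=0\}$: unpacking the modulus inequality branch-by-branch and establishing that the traced differential lies in the correct injective hull requires careful interplay between the Cousin-complex description of $K_X$ and the additive-cycle techniques developed by Park and R\"ulling. A secondary concern is the \'etale-sheaf compatibility of $\alpha$, which is expected to follow, as in \cite{RenThesis}, from standard descent arguments.
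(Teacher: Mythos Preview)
Your overall architecture is close to the paper's, but there is a genuine error in the defining formula: the weight factor must be $\tfrac{1}{t}$, not $\tfrac{1}{t^{M}}$. With $\tfrac{1}{t^{M}}$ the map is \emph{not} a chain map. The obstruction appears exactly at the points $\tilde{\bar z}$ of the normalized closure $\tilde{\bar Z}$ lying over $\{t=0\}$. Write $m=\rmv_{\tilde{\bar z}}(t)$ and $m_i=\rmv_{\tilde{\bar z}}(y_i-1)$ for those $i$ with $y_i(\tilde{\bar z})=1$. A direct computation (expanding $d\log y_i$ and discarding repeated $d\pi_{\tilde{\bar z}}$'s) shows that
\[
\frac{1}{t^{N}}\,d\log\{y_1,\dots,y_q\}
\]
is regular at $\tilde{\bar z}$ if and only if $Nm+1\le \sum_i m_i$. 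The modulus condition gives precisely $Mm\le \sum_i m_i$. For $N=1$ this suffices because $m+1\le Mm$ whenever $M\ge 2$ and $m\ge 1$; for $N=M$ it fails whenever the modulus inequality is an equality, and then $\partial^{\res}_{\tilde{\bar Z},\tilde{\bar z}}$ produces a nonzero contribution with no counterpart on the cycle side (faces never meet $\{t=0\}$). So your proposed $\alpha$ does not commute with the differentials. Note also that the paper's $\alpha$ is \emph{independent of $M$} and in fact factors through the $M=2$ complex; the $M$-dependence of your formula is already a warning sign.

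Relatedly, you have misplaced the role of the modulus condition. It is \emph{not} needed to make the traced form a legitimate element of $K_X^{-n}$: the trace $\Tr_{\bar Z\to W}$ of any rational top form on $\bar Z$ lands in $\Omega^q_{k(\eta_W)}\subset K_W^{-q}$ automatically when $\bar\pr$ is generically finite. The modulus condition is needed only in the chain-map verification, to kill the residues over $\{t=0\}$. Your sketch also omits the other boundary strata one must check: residues over $\{t=\infty\}$ vanish because $\tfrac{1}{t}$ contributes a zero while $d\log y_i$ has at worst a log pole; residues over $\{y_i=1\}\setminus\{t=0,\infty\}$ vanish because $d\log y_i$ is regular there; and the case where $\bar Z\to W$ is not generically finite (so $\alpha(Z)=0$) still requires a reciprocity-type argument to see that the face contributions on the other side add to zero. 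Finally, the compatibility with $i_{1*}$ works for the same reason you give (at $t=1$ the weight disappears), but only once the correct weight $\tfrac{1}{t}$ is in place.
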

As a corollary, we get the following injectivity statement.
\begin{corollary}[\Cref{injectivity}]
Suppose $k=\bar k$.
Then for any $q$, the following diagram 
$$\xymatrix{
\CH_0(X,q; \Z/p) \ar[r]^(0.33){i_{1*}}\ar@{=}[d] &
\CH_0(X\times \A^1| M(X\times {0}), q; \Z/p)\ar[d]
\\
H^{2d-q}_{\cM,\et}(X_\et, \Z/p(d)) \ar[r]^(0.33){i_{1*}} &
H^{2d+2-q}_{\cM,\et}({X\times \A^1| M(X\times {0})}, \Z/p(d+1)).
}$$
commutes, and the two horizontal maps  are injective.
\end{corollary}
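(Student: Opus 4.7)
\emph{Commutativity.} The equality on the left encodes the definitional identification of the higher Chow group of zero cycles with its étale motivic cohomology counterpart, and the right vertical is the analogous comparison morphism for the cycle complex with modulus. Both horizontal pushforwards arise from the same chain-level map of \cite[Lemma 2.7(1)]{BS19}, applied either before or after étale sheafification of the cycle complex; commutativity therefore follows from naturality of the comparison.

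\emph{Reducing to a hypercohomology statement.} The plan for injectivity is to combine the theorem just established with the main result of \cite{RenThesis}, adapted to the cubical setting via $\varphi'$. Since $\bar{k}$ is perfect, the chain map $\zeta \circ \varphi' \colon \Z^{\rm cube}_X \to K_{X,log}$ becomes an étale quasi-isomorphism after reduction mod $p$ (the mod $p$ reduction is automatic on the target because $K_X$ is a complex of $k$-modules and $k$ has characteristic $p$). Applying étale hypercohomology in degree $-q$ to the commutative square of the theorem and inverting the left vertical, the composition
\begin{equation*}
H^{2d-q}_{\cM,\et}(X_\et, \Z/p(d)) \xrightarrow{i_{1*}} H^{2d+2-q}_{\cM,\et}\bigl(X \times \A^1 \mid M(X \times \{0\}), \Z/p(d+1)\bigr) \xrightarrow{\alpha^*} H^{-q}(X_\et, K_X)
\end{equation*}
is identified with the natural map $\eta \colon H^{-q}(X_\et, K_{X,log}) \to H^{-q}(X_\et, K_X)$ induced by $(a,b) \mapsto a$, precomposed with the Ren isomorphism. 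Hence injectivity of $\eta$ forces injectivity of $\alpha^* \circ i_{1*}$, and therefore of the bottom $i_{1*}$ of the corollary; the top $i_{1*}$ then inherits injectivity through the commutativity of the corollary's square, because the left vertical is an isomorphism.

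\emph{The main obstacle.} Injectivity of $\eta$ is the key technical point. Using the defining distinguished triangle $K_{X,log} \to K_X \xrightarrow{C-1} K_X \to K_{X,log}[1]$ in the étale topology, one has $\ker(\eta) = \Coker\bigl(C - 1 \colon H^{-q-1}(X_\et, K_X) \to H^{-q-1}(X_\et, K_X)\bigr)$, so the claim reduces to surjectivity of $C - 1$ on this hypercohomology group when $k = \bar{k}$. The plan here is to exploit the Cousin/residual structure of $K_X$: each component is a direct sum of skyscraper-type quasi-coherent sheaves at points of $X$, and over an algebraically closed $k$ the étale hypercohomology of each such component agrees with its Zariski hypercohomology by the vanishing of positive Galois cohomology at the residue fields, reducing the computation to the complex of global sections. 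Componentwise surjectivity of $C - 1$ on these global sections, which is an Artin--Schreier-type statement valid over an algebraically closed field, is then transported through the hypercohomology spectral sequence of the Cousin filtration. This last step, controlling the spectral sequence while retaining the componentwise surjectivity, is the main obstacle I expect.
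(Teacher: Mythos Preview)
Your approach matches the paper's: reduce both injectivities to that of $\eta\colon H^{-q}(X_\et,K_{X,\log})\to H^{-q}(X_\et,K_X)$ via the compatibility square of the theorem and the mod-$p$ \'etale quasi-isomorphism $\zeta\circ\varphi'$. Two small corrections are worth noting. First, the left vertical is not definitional: it is \'etale descent for higher Chow groups with $\Z/p$-coefficients over an algebraically closed field, which the paper invokes as \cite[Proposition~8.8]{RenThesis}. Second, the paper does not re-prove the injectivity of $\eta$ but simply cites \cite[Proposition~8.2]{RenThesis}; your spectral-sequence worry there is misplaced, since $K_X$ already consists of injective \'etale sheaves and hence \'etale hypercohomology is literally the cohomology of $\Gamma(X,K_X)$ with no Cousin spectral sequence left to control --- the only content is exactly the surjectivity of $C-1$ on $H^{-q-1}(\Gamma(X,K_X))$ that you isolate.
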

\section{Recollection of cycle complexes with modulus}
\label{Rec}
Let $k$ be a field, $X$ be a separated of finite type $k$-scheme of dimension $d$, and $D$ be an effective Cartier divisor on $X$.
In this section we recall the definition of cycle complexes with modulus from \cite[Section 2]{BS19}.
The cycle complex with modulus is used to compute higher Chow groups with modulus, which is a generalization of the additive higher Chow groups defined by Bloch-Esnault \cite{BE1, BE2} and Park \cite{Park}.

\begin{para} 
Set $\P^1=\Proj k[Y_0,Y_1]$ and let $y=Y_1/Y_0$ be the standard coordinate function on $\P^1$.
We set 
\[\square=\P^1\setminus\{1\},\quad 
\bar \square=\P^1,\quad\text{and}\quad
\square ^q=(\P^1\setminus\{1\})^q,\quad
\bar \square ^q=(\P^1)^q, \text{ for }q>1
.\]
By convention we set $\square^0=\Spec k$.  Let $q_i: \bar \square^q\to \bar \square$ 
be the projection onto the $i$-th factor. We use the coordinate system $(y_1,\ldots, y_q)$ on $\bar \square^q$ with 
$y_i=y\circ q_i$. Let $F_i^q\subset \bar \square^q$ be the Cartier divisor defined by $\{y_i=1\}$ and 
put
$$F_q=\sum_{i=1}^q F^q_i.$$
A {\em face} of $\square^q$ is a subscheme $F$ defined by equations of the form 
\[y_{i_1}=\epsilon_1,\ldots, y_{i_r}=\epsilon_r,\quad 
                  r\in [1,q],\, (i_1,\ldots, i_r)\in [1,q]^r, \epsilon_{i_j}\in \{0,\infty\}.\]
We denote by $\imath_F: F\inj \square^q$ the closed immersion. For $\epsilon=0,\infty$  and $i\in [1,q]$, let 
\[\imath^q_{i,\epsilon}: \square^{q-1}\inj \square^q\]
be the inclusion of the face of codimension 1 given by $y_i=\epsilon$.
\end{para}

\begin{para}\label{defn:modulus-cycle}
Let $D$ be an effective Cartier divisor on $X$.
For $r,q\ge 0$ we denote by 
% This is NOT an etale sheaf of sets!
$$C_{d-r}(X|D,q)$$
(or $C^r(X|D,q)$, when $X$ is equidimensional) the set of all
integral closed subschemes 
$$Z\subset (X\setminus D)\times\square^q$$
of dimension $d+q-r$ which satisfy the following conditions:
\begin{enumerate}
\item $Z$ intersects  $(X\setminus D)\times F$ properly for all faces $F\subset \square^q$.
\item When $q=0$: The closure of $Z$ in $X$ does not meet $D$.
\item When $q\ge 1$: 
Denote by $\bar{Z}\subset X\times \bar \square^q$ the closure of $Z$ and 
             by $\nu_{\bar{Z}}: \tilde{Z}\to X\times \bar \square^q $ the composition 
                of the normalization $\tilde{Z}\to \bar{Z}$ 
           followed by the closed immersion $\bar{Z}\inj X\times \bar \square^q$.
          Then the following inequality between Cartier divisors  holds:
                   \eq{defn:modulus-cycle1}{\nu_{\bar{Z}}^*(D\times \bar \square^q)\le \nu_{\bar{Z}}^*(X\times F_q).}
% In particular, we will be concerned with the sum modulus condition. (See \cite[Def. 2.1]{KP18} for a comparison among different modulus conditions.)
\end{enumerate}
% An element of $C_{d-r}(X|D,q)$ is called an {\em integral relative cycle of dimension $d+q-r$ for $(X,D)$}.
\end{para}

\begin{para}
Denote by $\ul{z}_{d-r}(X|D,q)$ (or $\ul{z}^r(X|D,q)$, when $X$ is equidimensional) the free abelian group on the set
$C_{d-r}(X|D,q)$.
By  \cite[Lemma 2.1]{BS19} there is a well-defined pullback map
$(\id_X\times\imath_F)^*: \ul{z}_{d-r}(X|D,q)\to \ul{z}_{d-r}(X|D,m)$ for any
$m$-dimensional face $\square^m\cong F\subset \square^q$. 
We obtain a cubical object of abelian groups (see e.g., \cite[\S1.1]{Le09}):
\[\ul{q}\mapsto \ul{z}_{d-r}(X|D,q) \quad (\ul{q}=\{0,\infty\}^q, q=0,1,2,3,\ldots).\]
For each $q$ we have the subgroup $\ul{z}_{d-r}(X|D,q)_{\rm degn}$ of degenerate cycles,
i.e. those cycles which come from $\ul{z}_{d-r}(X|D,q-1)$ via pullback along 
one of the $q$ projections $(X\setminus D)\times \square^{q}\to (X\setminus D)\times \square^{q-1}$.
We set 
\[z_{d-r}(X|D,q):=\frac{\ul{z}_{d-r}(X|D,q)}{\ul{z}_{d-r}(X|D,q)_{\rm degn}}.\]
(This is also denoted by $z^r(X|D,q)$ when $X$ is equidimensional.)
For any $q,r\ge 0$, $z_{d-r}(X|D,q)$ is a free abelian group.
The $q$-th boundary operator 
$\partial^\cyc: z_{d-r}(X|D,q)\to z_{d-r}(X|D,q-1)$
is given by 
\[\partial^\cyc= \sum_{i=1}^q (-1)^i (\partial^\infty_{i}-\partial^0_i),\]
where
$$\partial^\epsilon_i= (\id_X \times \imath^q_{i,\epsilon})^*: z_{d-r}(X|D,q)\to z_{d-r}(X|D,q-1)$$
is the pullback along the face $\{y_i=\epsilon\}$.
We get a complex $z_{d-r}(X|D,\bullet)$, which is the homological complex associated to the cubical object 
$\ul{q}\mapsto \ul{z}_{d-r}(X|D,q)$.
For any abelian group $A$, any integers $q,r\ge 0$, the association
$$U\mapsto z_{d-r}(U|U\times_X D,q)\otimes_{\Z}A$$
 is an \'etale sheaf on $X$ \cite[\S2.1.3]{BS19}.
\footnote{
The proof of this fact is completely analogous to the proof of \cite[Lemma 3.1]{Ge04} in the simplicial setting. One just has to note that $z_{d-r}(X|D,q)$ is not only a quotient but also a direct summand of $\ul z_{d-r}(X|D,q)$, and that the modulus condition behaves well with respect to \'etale pullbacks.
}
Set
$$A(r)^q_{X|D}(U):=  z_{d-r}(U|U\times_X D,2r-q)\otimes_{\Z} A.$$
For every $U\in X_\et$, $A(r)_{X|D}(U)$ is a complex of free $A$-modules. 
The association
$U\mapsto A(r)_{X|D}(U)$ is a complex of \'etale sheaves.
If $D=0$ we get back the cubical cycle complex with coefficients in $A$, and we omit ``$|D$'' from the notations in this case. 
% We simply write $z_{d-r}(X,\bullet)$ and $\CH_{d-r}(X,q)$ instead of 
% $z_{d-r}(X|0,\bullet)$ and $\CH_{d-r}(X|0,q)$, respectively. 
% To match the usual degree convention for duality theories, set $$\Z^{\rm cube}_X:=\Z(d)_{X}[2d]:=\Z(d)_{X|0}[2d].$$
% This is a cohomological complex concentrated in degrees $(-\infty,0]$.
\end{para}
\begin{para}
Let $A$ be an abelian group.
The higher Chow groups of $(X,D)$ with coefficients in $A$ are defined to be 
\[\CH_{d-r}(X|D,q;A)
:= H_q(z_{d-r}(X|D,\bullet)\otimes_{\Z} A)
=H^{2r-q}(A(r)_{X|D}(X)), \quad q,r\ge 0,\]
see \cite[Definition 2.5]{BS19}.  
(This is also denoted by $\CH^r(X|D,q;A)$ when $X$ is equidimensional.)
If $D=0$ we get back Bloch's classical definition of 
higher Chow groups by \cite[Theorem 4.3]{Bloch_Notes}, hence we can simply write  $\CH_{d-r}(X,q;A)$ instead of $\CH_{d-r}(X|0,q;A)$. 
For $\tau=\Zar, \Nis$ or $\et$, the $\tau$ motivic cohomology of $(X,D)$
with coefficients in $A$ are defined to be 
\[
H^i_{\cM,\tau}(X|D,A(r)):=
R^i\Gamma(X_\tau,A(r)_{X|D}), \quad r\ge 0.\]
see \cite[Definition 2.10]{BS19}.  
In general, the natural map
$$\CH_{d-r}(X|D,q;A)\ra H^{2r-q}_{\cM,\tau}(X|D,A(r))$$
for any given $r,q\ge 0$ is not an isomorphism, see the discussion in \cite[\S2.1.4]{BS19} and the counterexample for zero cycles with $\tau=\Nis$ in \cite[\S10]{GK}.
However, when $D=0$ and $\tau=\Zar$, it is an isomorphism for any $r,q\ge 0$.
This is the Zariski descent for higher Chow groups.
\end{para}

\section{A cubical version of Zhong's comparison theorem}
Let $k$ be a field and $X$ be a separated of finite type $k$-scheme of dimension $d$.

\begin{para}
Let $r\ge 0$ be an integer.
Denote by $C^M_{X,r}$  the Gersten complex of Milnor $K$-theory $K^M_{X,r}$. Namely,
$$C^M_{X,r}:=\left(
\bigoplus_{x\in X_{(d)}} \iota_{x*}K^M_{r}(k(x))\xra{\partial^M}
\bigoplus_{x\in X_{(d-1)}} \iota_{x*}K^M_{r-1}(k(x))\xra{\partial^M}
\dots\xra{\partial^M}
\bigoplus_{x\in X_{(0)}} \iota_{x*}K^M_{r-d}(k(x))
\right).
$$
Here $\iota_{x*}:x\hra X$ is the natural inclusion.
We regard $C^M_{X,r}$ as a complex of \'etale sheaves sitting in degrees $[r,d+r]$, namely,
$$(C^M_{X,r})^{q}=\bigoplus_{x\in X_{(d+r-q)}} \iota_{x*}K^M_{2r-q}(k(x)).$$
% $$(C^M_{X,r})^{2r-q}=\bigoplus_{x\in X_{(d-r+q)}} K^M_q(k(x)).$$
The  differentials $\partial^M$ are defined in a standard way so that $C^M_{X,r}$ is a cycle module in the sense of Rost, see \cite[(2.1.0)]{Rost}.

\end{para}

\begin{para} 
\label{notforcubical}
We fix some notations.
Let $Z\subset X\times  \square^q$ be a $a$-dimensional prime cycle. 
$W$ is the schematic image of $Z$ under the map 
$\bar \pr:X\times \bar \square^q\ra X$. 
$\bar Z$ is the closure of $Z$ in $X\times \bar \square^q$.
In the following diagram, the horizontal maps are normalizations, and the two unlabeled arrows are the natural open immersions. 
The maps $\pr: X\times  \square^q\ra X$, $\bar \pr: X\times \bar \square^q\ra X$ induce the maps $\pr:Z\ra W$ and $\bar \pr:\bar Z\ra W$ on the right. 
The maps $\tilde \pr:\tilde Z\ra \tilde W$ and $\tilde{\bar \pr}:\tilde{\bar Z}\ra \tilde W$ are given by the universal property of the normalization.
$$\xymatrix{
&
\tilde{\bar Z}
\ar[rr]^{\rho_{\bar Z}}\ar[ddl]^(.6){\tilde{\bar \pr}}
&&
\bar Z
\ar[ddl]^(.6){\bar \pr}
\\
\tilde Z
\ar[rr]^{\rho_Z} \ar@{^(->}[ur]\ar[d]_{\tilde \pr} 
&&
Z
\ar@{^(->}[ur]\ar[d]_{\pr}
\\
\tilde W
\ar[rr]^{\rho_W}
&&
W
}$$
We will use lowercase letters to represent dimension $a-1$ points of the corresponding scheme, e.g., $z\in Z_{(a-1)}, \tilde z\in \tilde Z_{(a-1)}, \tilde{\bar z}\in \tilde{\bar Z}_{(a-1)}, w\in W_{(a-1)}$, etc. ($w$ is not necessarily codimension 1 point in $W$!) The symbol $\eta$ with subscript will denote the generic point of the corresponding integral scheme, e.g., $\eta_W,\eta_Z$, etc.. $\rmv_{\tilde z}(-), \rmv_{\tilde{\bar z}}(-)$ etc., denotes the valuation.
\end{para} 

\begin{para} 
\label{phiprimer}
With our degree convention and notations from \Cref{notforcubical}, we adapt the map from \cite[\S3.1]{RS18} and define
\begin{align*}
\varphi'(r)^{2r-q}: 
\Gamma(X,\Z(r)_X^{2r-q})=z_{d-r}(X,q)
&\ra 
\Gamma(X,(C^M_{X,r})^{2r-q})
=\bigoplus_{x\in X_{(d-r+q)}} K^M_q(k(x))
\\
Z&\mapsto
\begin{cases}
\Nm_{k(\eta_{\bar Z})/k(\eta_W)}(\{y_1,\dots,y_q\}),&
\text{$\bar Z\xra{\bar \pr} W$ is gen. fin.}\\
0,&
\text{$\bar Z\xra{\bar \pr} W$ is not gen. fin.}
\end{cases}
\end{align*}
\end{para}
\begin{lemma}
For any $r\ge 0$,
$\varphi'(r):\Gamma(X,\Z(r)_{X})\ra \Gamma(X,C^M_{X,r})$ is a chain map.
\end{lemma}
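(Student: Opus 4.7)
The plan is to verify the chain map condition $\partial^M \varphi'(r)(Z) = \varphi'(r)(\partial^{\cyc} Z)$ on each prime generator $Z \in z_{d-r}(X,q)$. This is the cubical analogue of \cite[Lemma 3.1]{RS18}; the sign adaptation encoded in \Cref{phiprimer} is designed precisely so that the computation matches the cubical boundary $\partial^{\cyc} = \sum_i (-1)^i(\partial^\infty_i - \partial^0_i)$. I would first dispose of the case where $\bar \pr: \bar Z \to W$ is not generically finite: here $\varphi'(r)(Z) = 0$ by definition, and since the generic fibre of $\bar \pr$ is positive-dimensional, a dimension count (identical to the one in \cite[Lemma 3.1]{RS18}) shows that no component of $\partial^\epsilon_i Z$ can project generically finitely onto its image in $X$, so $\varphi'(r)(\partial^{\cyc} Z) = 0$ as well.

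In the remaining case $\bar Z \to W$ is generically finite. Fixing $w \in W_{(d-r+q-1)}$, compatibility of the tame symbol with the norm on the normalization $\tilde{\bar Z}$ (Weil reciprocity, equivalently the cycle module axioms of \cite{Rost}) gives
\[
\partial^M_w\bigl(\Nm_{k(\eta_{\bar Z})/k(\eta_W)}\{y_1,\ldots,y_q\}\bigr)
= \sum_{\tilde{\bar z}\mapsto w} \Nm_{k(\tilde{\bar z})/k(w)}\bigl(\partial^M_{\tilde{\bar z}}\{y_1,\ldots,y_q\}\bigr),
\]
where $\tilde{\bar z}$ runs over codimension-one points of $\tilde{\bar Z}$ lying over $w$. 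A given $\tilde{\bar z}$ contributes nontrivially only when $\rmv_{\tilde{\bar z}}(y_i)\neq 0$ for some $i$, i.e.\ when $\rho_{\bar Z}(\tilde{\bar z})$ lies on a face $\{y_i=0\}$ or $\{y_i=\infty\}$ of $\bar\square^q$; the proper intersection hypothesis forces this $i$ to be unique at each such $\tilde{\bar z}$, and the tame symbol evaluates to $(-1)^{q-i}\rmv_{\tilde{\bar z}}(y_i)\cdot\{\bar y_1,\ldots,\widehat{\bar y_i},\ldots,\bar y_q\}$. On the right-hand side, each prime component $Z'$ of $\partial^\epsilon_i Z$ whose image in $X$ contains $w$ corresponds bijectively to such a $\tilde{\bar z}$, with $\rmv_{\tilde{\bar z}}(y_i) > 0$ for $\epsilon = 0$ and $< 0$ for $\epsilon = \infty$, and with intersection multiplicity $m_{Z'} = |\rmv_{\tilde{\bar z}}(y_i)|$; transitivity of the norm along $k(w) \subset k(\eta_{W'}) \subset k(\tilde{\bar z})$ then matches the two sides component by component.

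The main obstacle is precisely the sign audit. The tame symbol carries a sign $(-1)^{q-i}$ dictated by the position of $y_i$ in the symbol, the sign of $\rmv_{\tilde{\bar z}}(y_i)$ is fixed by whether the point sits over $\{y_i=0\}$ or $\{y_i=\infty\}$, and the cubical differential contributes the sign $(-1)^i$ together with the difference $\partial^\infty_i - \partial^0_i$. The sign adaptation in \Cref{phiprimer} relative to the map of \cite[Lemma 3.1]{RS18} is chosen so that these contributions assemble consistently. Modulo this (routine but delicate) sign bookkeeping, the remaining input is the classical projection formula for norm and tame symbol in Milnor $K$-theory, a direct consequence of the cycle module axioms of \cite{Rost}.
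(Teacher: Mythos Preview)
There is a genuine gap in your handling of the case where $\bar\pr:\bar Z\to W$ is not generically finite. Your claim that a dimension count rules out any face component of $Z$ projecting generically finitely to its image in $X$ fails exactly when the generic fibre of $\bar\pr$ is one-dimensional, i.e.\ when $\dim W = d-r+q-1$. A face component $Z'$ then has $\dim Z' = d-r+q-1$, and its image $W'\subset W$ can certainly equal $W$; in that event $Z'\to W'$ is generically finite and $\varphi'(r)(Z')$ contributes a nonzero norm of a Milnor symbol at $\eta_W$. What forces the total contribution $\varphi'(r)(\partial^\cyc Z)_{\eta_W}$ to vanish is Weil reciprocity on the proper curve $\tilde{\bar Z}_{\eta_W}$ over $k(\eta_W)$: the sum $\sum_{\tilde{\bar z}}\Nm_{k(\tilde{\bar z})/k(\eta_W)}\partial_{\tilde{\bar z}}\{y_1,\ldots,y_q\}$ over all closed points of this curve is zero. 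This is precisely the ``2nd case'' of \cite[Lemma 3.1]{RS18}; that lemma does not dispose of this case by a dimension count, and the paper's own remark following the proof of the main proposition explicitly flags reciprocity as the key input there.

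Two smaller points in the generically finite case: the correspondence between face components $Z'$ and points $\tilde{\bar z}\in\tilde{\bar Z}$ above $w$ is not a bijection, since a single $z=\eta_{Z'}$ may have several preimages in the normalization, and the intersection multiplicity is $\ord_z(y_i)=\sum_{\tilde z\mapsto z}[k(\tilde z):k(z)]\,\rmv_{\tilde z}(y_i)$ rather than a single valuation; and you must also observe that points $\tilde{\bar z}$ lying on $\bar Z\setminus Z$ (i.e.\ on some $\{y_j=1\}$) contribute zero, since the residual symbol then contains $\bar y_j=1$. The paper sidesteps redoing the full argument by noting that $\varphi'$ and the tame symbol each differ from their counterparts in \cite{RS18} only by a sign, so the chain map identity transfers from \cite[Lemma 3.1]{RS18} at every $w$ where one side vanishes, leaving only the explicit sign check at points where both sides are nonzero.
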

\begin{proof}
% Checking that the presheaf map $\varphi'$ is actually a sheaf-theoretic map is completely analogous to the corresponding part in \Cref{thm}.
Consider the diagram
$$\xymatrix{
z_{d-r}(X,q)
\ar[r]^(0.4){\varphi'(r)}
\ar[d]^{\partial^{\cyc}}
& 
\bigoplus_{x\in X_{(d-r+q)}} K^M_q(k(x))
\ar[d]^{\partial^M}
\\
z_{d-r}(X,q-1)
\ar[r]^(0.4){\varphi'(r)}
&
\bigoplus_{w\in X_{(d-r+q-1)}} K^M_{q-1}(k(w))
}$$
commutes.
Take any $(d-r+q)$-dimensional prime cycle $Z\subset X\times \square^q$.
Since our $\varphi'$ only differs from $\varphi$ by a sign, 
it suffices to check the commutativity over those points
$w\in X_{(d-r+q-1)}$ 
such that $\partial^M\circ\varphi'(Z)\neq 0$, 
and the commutativity of the rest of the cases follows from \cite[Lemma 3.1]{RS18}.
That is, it suffices to check over 
$w\in X_{(d-r+q-1)}\cap \bar\pr(Z)$
which is the projection of a point
$z\in Z_{(d-r+q-1)}$ lying in a face
$\{y_i(z)=\epsilon(z)\}$ with $i\in \{1,\dots,q\}$, $\epsilon(z)\in\{0,\infty\}$.
For a fixed $i$, one has $y_i=u(i,\tilde{\bar z})\cdot \pi_{\tilde{\bar z}}^{\rmv_{\tilde{\bar z}}(y_i)}$ for some $u(i,\tilde{\bar z})\in \cO_{\tilde{\bar Z},\tilde{\bar z}}^*$.
Denote by 
\eq{tamesymbol}{\partial_{\tilde{\bar Z},\tilde{\bar z}}:
K^M_q(k(\eta_{\tilde{\bar Z}}))\ra 
K^M_{q-1}(k({\tilde{\bar z}})),\qquad
\{ \pi_{\tilde{\bar z}},s_2,\dots,s_q\}\mapsto \{s_2,\dots,s_q\}
}
the tame symbol.
(Our sign convention for the tame symbol is the same as \cite{Totaro}, and differs from \cite{RS18} by a sign.)
On the one hand,
\begin{align*}
\partial^{M}_{w}\circ \varphi'(r)(Z)
&=
\partial^{M}_{w}\left(
\Nm_{k(\eta_{\bar Z})/k(\eta_W)}( \{y_1,...,y_q\})
\right)
\\
&\stackrel{(a)}{=}
\sum_{\tilde{\bar z}\in (\bar \pr \circ \rho_{\bar Z})^{-1}(w)}
\Nm_{k({\tilde{\bar z}})/k(w)}
\left(
\partial^{}_{\tilde{\bar Z},\tilde{\bar z}}
( \{y_1,...,y_q\})
\right)
\\
&=
\sum_{\tilde{\bar z}\in (\bar \pr \circ \rho_{\bar Z})^{-1}(w)\cap \tilde Z^{(1)}}
(-1)^{i(\tilde z)-1}
\rmv_{\tilde z}(y_{i(\tilde{\bar z})})
\Nm_{k({\tilde{\bar z}})/k(w)}
( \{y_1,...,\hat{y_{i(\tilde{\bar z})}},...,y_q\}).
\end{align*}
Here (a) comes from the definition of the differential $\partial^M$ in the complex $C^M_{X,r}$ (with our sign convention for the tame symbol \eqref{tamesymbol}), functoriality of the norm map and compatibility of the norm with the tame symbol (see e.g., \cite[R3b]{Rost}).
On the other hand,
\begin{align*}
\varphi'(r)_w\circ \partial^{\cyc}(Z)
&=
\varphi'(r)_w\left(
\sum_{i=1}^q (-1)^{i-1}
\left[
\sum_{z\in Z^{(1)}}
\ord_z(y_{i})\cdot \bar{\{z\}}^Z
\right]
\right)
\nonumber
\\
&=
\sum_{z\in \pr^{-1}(w)\cap Z^{(1)}}
(-1)^{i(z)-1} \ord_z(y_{i(z)}) 
\Nm_{k(z)/k(w)}
( \{y_1,...,\hat{y_{i(z)}},...,y_q\})
\nonumber
\\
&=
\sum_{z\in \pr^{-1}(w)\cap Z^{(1)}}
(-1)^{i(z)-1} \left(
\sum_{\tilde{z}\in \rho_{Z}^{-1}(z)}
\rmv_{\tilde z}(y_{i(z)})[k(\tilde z):k(z)]
\right)
\\
&\qquad\qquad\qquad\qquad\qquad
\cdot
\Nm_{k(z)/k(w)}
( \{y_1,...,\hat{y_{i(z)}},...,y_q\})
\nonumber
\\
&\stackrel{(b)}{=}
\sum_{z\in \pr^{-1}(w)\cap Z^{(1)}}
(-1)^{i(z)-1}
\sum_{\tilde{z}\in \rho_{Z}^{-1}(z)}
\rmv_{\tilde z}(y_{i(z)})
\Nm_{k(\tilde z)/k(w)}
(\{y_1,...,\hat{y_{i(z)}},...,y_q\})
\nonumber
\\
&=
\sum_{\tilde{z}\in (\pr\circ \rho_{Z})^{-1}(w)\cap \tilde Z^{(1)}}
(-1)^{i(\tilde z)-1}
\rmv_{\tilde z}(y_{i(\tilde z)})
\Nm_{k(\tilde z)/k(w)}
( \{y_1,...,\hat{y_{i(\tilde z)}},...,y_q\}).
\nonumber
\end{align*}
Here (b) follows from e.g., \cite[R2d]{Rost}.
Note that the index sets of these two last formulas are the same. 
We are hence done with the proof.
\end{proof}
The following proposition is a cubical version of \cite[Theorem 2.16]{Zhong}.
We warn the reader that here we are using the motivic theoretic symbols while \textit{loc. cit.} the author is using cycle complex conventional symbols.
In particular, our $\Z(r)_X$ corresponds to his $\Z^c_X(d-r)$ and our $C^M_{X,r}$ corresponds to his $C^M_X(d-r)$ (both up to signs).
\begin{prop}
\label{Zhongcubical}
Let $r\ge 0$,
% $r\in [0,d]$,
$n\ge 1$ be two integers.
Then 
$$\phi'(r)/p^n:\Z(r)_{X}(X)/p^n\ra C^M_{X,r}(X)/p^n$$
is a quasi-isomorphism.
\end{prop}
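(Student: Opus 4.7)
The plan is to reduce the statement to the simplicial version of Zhong's theorem \cite[Theorem 2.16]{Zhong} via the quasi-isomorphism between the simplicial and cubical cycle complexes provided by \cite[Theorem 4.7]{Levine_revisit} (or equivalently \cite[Theorem 4.3]{Bloch_Notes}), which the author has already invoked in the introduction. Once this reduction is set up, the essential input from the cubical world is only the explicit formula for $\varphi'(r)$, which was purpose-built to match Zhong's simplicial formula.

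First, I would spell out the Levine--Bloch comparison map
\[
\Phi(r)\colon \Z^c_X \longrightarrow \Z(r)_X,
\]
with the shift that matches Zhong's indexing convention ($\Z^c_X(d-r)$ in \emph{loc.\ cit.}) with the present convention $\Z(r)_X$. By \cite[Theorem 4.7]{Levine_revisit}, $\Phi(r)$ is a quasi-isomorphism of complexes of \'etale sheaves, hence so is $\Phi(r)/p^n$ after reduction modulo $p^n$, and the same on global sections.

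Second, I would verify that the square
\[
\xymatrix{
\Z^c_X(X)/p^n \ar[r]^{\psi(r)/p^n} \ar[d]_{\Phi(r)/p^n} & C^M_{X,r}(X)/p^n \ar@{=}[d] \\
\Z(r)_X(X)/p^n \ar[r]^{\varphi'(r)/p^n} & C^M_{X,r}(X)/p^n
}
\]
commutes, where $\psi(r)$ denotes Zhong's simplicial comparison map. Both $\psi(r)$ and $\varphi'(r)$ are defined by the same norm-of-symbols recipe $Z\mapsto \Nm_{k(\eta_{\bar Z})/k(\eta_W)}(\{y_1,\ldots,y_q\})$ on cycles whose closure is generically finite over $X$ (and by zero otherwise), so the compatibility boils down to the compatibility of the two conventions for face coordinates. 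The sign-adjustment built into $\varphi'(r)$ (the point of introducing $\varphi'$ rather than the Rülling--Saito map $\varphi$, as highlighted in the footnote of the introduction) is designed precisely so that this square commutes on the nose rather than merely up to homotopy.

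Finally, Zhong's theorem says that $\psi(r)/p^n$ is a quasi-isomorphism. Combined with the quasi-isomorphism $\Phi(r)/p^n$ of the first step and the commutativity of the second step, the two-out-of-three principle forces $\varphi'(r)/p^n$ to be a quasi-isomorphism.

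The main technical obstacle is the commutativity in the second step: one has to chase the alternating-sum formula of the simplicial-cubical comparison $\Phi(r)$ together with the tame-symbol sign conventions, and confirm that the assembled Milnor symbols and their norms match on the two sides. This is the reason why $\varphi'$ is introduced rather than reusing $\varphi$ from \cite{RS18}: once the signs are calibrated, the remainder of the argument is purely formal.
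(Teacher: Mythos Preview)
Your strategy differs from the paper's. The paper does not reduce to the simplicial version of Zhong's theorem via the Levine--Bloch comparison $\Phi(r)$; instead it reruns Zhong's spectral sequence argument directly in the cubical setting. Concretely, the niveau spectral sequence for cubical higher Chow groups degenerates at $E_1$ by \cite{GL00}, and the surviving row is identified with $C^M_{X,r}(X)/p^n$ using Totaro's cubical isomorphism $\CH_{d-r}(k(x),s-d+r;\Z/p^n)\xra{\simeq} K^M_{s-d+r}(k(x))/p^n$ in place of Nesterenko--Suslin. That identification \emph{is} $\varphi'(r)/p^n$, so one concludes without ever touching the simplicial complex.

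Your approach could in principle work, but the central step is not established. The claim that $\psi(r)$ and $\varphi'(r)$ are ``defined by the same norm-of-symbols recipe'' is not accurate: $\psi(r)$ uses the Nesterenko--Suslin symbols built from simplicial coordinates on $\Delta^q$, while $\varphi'(r)$ uses the cubical coordinates $y_1,\ldots,y_q$ on $\square^q$. The comparison $\Phi(r)$ is a nontrivial correspondence (or zig-zag), and verifying that it intertwines these two norm maps on the nose is a genuine computation you have not carried out. Your assertion that the sign in $\varphi'$ was ``designed precisely so that this square commutes'' is also not what the paper says: the sign adjustment from $\varphi$ to $\varphi'$ is made to match the tame-symbol convention \eqref{tamesymbol} and the differentials in $K_X$, not to interface with $\Phi(r)$. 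So the commutativity of your square remains an unproven (and possibly delicate) claim, whereas the paper's direct argument sidesteps it entirely.
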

\begin{proof}
The proof below follows completely from the outline of Zhong's proof, except for the point that we replace the simplicial map of Nesterenko–Suslin by the cubical map of Totaro.
We record the proof for the convenience of the reader.
Consider the niveau spectral sequence of higher Chow groups
$$E^1_{s.t}= \bigoplus_{x\in X_{(s)}} 
\CH_{d-r}(k(x),s+t-2d+2r;\Z/p^n)
\Rightarrow
\CH_{d-r}(X,s+t-2d+2r;\Z/p^n)
=H^{2d-s-t}(\Z(r)_X(X)/p^n) $$
By \cite[Theorem 1.1]{GL00}, $\CH_{d-r}(k(x),s+t-2d+2r;\Z/p^n)
% =H^{s-t}(k(x),\Z(s-d+r)_{\Spec k(x)}/p^n)
=0$ for all $t\neq d-r$. Hence the spectral sequence degenerate at $E_1$, and 
we have canonical isomorphisms
$$H^{d-s+r}(\Z(r)_X(X)/p^n)\simeq H_s(E_{\bullet,d-r}^1)$$
for all $s$, given by the edge morphism of the spectral sequence. 
Instead of using the simplicial map of Nesterenko–Suslin \cite{NS}, we use the cubical map of Totaro \cite[\S3]{Totaro} to identify
$$
\CH_{d-r}(k(x),s-d+r;\Z/p^n)
% =\CH^{s-d+r}(k(x),s-d+r;\Z/p^n)
\xra{\simeq}
K^M_{s-d+r}(k(x))/p^n,
\quad 
Z\mapsto \Nm_{k(Z)/k(x)}(\{y_1,\dots,y_{s-d+r}\}).
$$
Hence we have an isomorphism of complexes
$$E^1_{\bullet,d-r}\simeq C^M_{X,r}(X)/p^n,$$
which is precisely our $\phi'(r)/p^n$.
This proves the proposition.
\end{proof}

\section{The main results}
Let $k$ be a field, and $X$ be a separated of finite type $k$-scheme of dimension $d$. 
Let $M\ge 2$ be any integer.
Now we specialize the discussion in \Cref{Rec} to the situation when 
$$(X\times \A^1, M(X\times \{0\}))$$
is the modulus pair and introduce some notations that will be used throughout our proofs.
We will be using the dimensional notations (as opposed to codimensional notations) because our $X$ is not necessarily equidimensional.
\begin{para}
Let $Z\subset X\times \A^1\setminus\{0\}\times \square^q$ be a $q$-dimensional prime cycle. 
$W$ is the schematic image of $Z$ under the map 
$\bar \pr:X\times \P^1\times \bar \square^q\ra X$. 
$\bar Z$ is the closure of $Z$ in $X\times \P^1\times \bar \square^q$.
In the following diagram, the horizontal maps are normalizations, and the two unlabeled arrows are the natural open immersions. 
The maps $\pr: X\times \A^1\setminus\{0\}\times \square^q\ra X$, $\bar \pr: X\times \P^1\times \bar \square^q\ra X$ induce the maps $\pr:Z\ra W$ and $\bar \pr:\bar Z\ra W$ on the right. 
The maps $\tilde \pr:\tilde Z\ra \tilde W$ and $\tilde{\bar \pr}:\tilde{\bar Z}\ra \tilde W$ are given by the universal property of the normalization.
$$\xymatrix{
&
\tilde{\bar Z}
\ar[rr]^{\rho_{\bar Z}}\ar[ddl]^(.6){\tilde{\bar \pr}}
&&
\bar Z
\ar[ddl]^(.6){\bar \pr}
\\
\tilde Z
\ar[rr]^{\rho_Z} \ar@{^(->}[ur]\ar[d]_{\tilde \pr} 
&&
Z
\ar@{^(->}[ur]\ar[d]_{\pr}
\\
\tilde W
\ar[rr]^{\rho_W}
&&
W
}$$
We will use lowercase letters to represent dimension $q-1$ points of the corresponding scheme, e.g., $z\in Z_{(q-1)}, \tilde z\in \tilde Z_{(q-1)}, \tilde{\bar z}\in \tilde{\bar Z}_{(q-1)}, w\in W_{(q-1)}$, etc. ($w$ is not necessarily codimension 1 point in $W$!) The symbol $\eta$ with subscript will denote the generic point of the corresponding integral scheme, e.g., $\eta_W,\eta_Z$, etc.. $\rmv_{\tilde z}(-), \rmv_{\tilde{\bar z}}(-)$ etc., denotes the valuation.
\end{para} 
\begin{para} 
Define
\begin{align*}
\alpha:
\uline z_0(X\times \A^1| M(X\times \{0\}),q)&
\ra \Gamma(X,K_X^{-q})\\
Z&\mapsto
\begin{cases}
(-1)^q\Tr_{\bar Z\xra{\bar\pr}W\hra X}^{-q}(\frac{1}{t}d\log \{y_1,...,y_q\}),&
\text{$\bar Z\xra{\bar \pr} W$ is gen. fin.}\\
0,&
\text{$\bar Z\xra{\bar \pr} W$ is not gen. fin.}
\end{cases}
\end{align*}
Here $t$ is the coordinate for $\A^1$ and $y_1,\dots, y_q$ are the coordinates for $\square^q$ as introduced in \Cref{Rec},
and the rational form $\frac{1}{t}d\log \{y_1,...,y_q\}\in \Omega^q_{\bar Z,\eta_{\bar Z}}$ denotes the pullback of the obvious rational form via the closed immersion $\bar Z\hra X\times \P^1\times \bar \square^q$.
The notation $\Tr_{Y\xra{f} Y'}$ (with $f:Y\ra Y'$ being a proper morphism) denotes the trace map along $f$ in Serre-Grothendieck duality theory, see \cite[VI.\S4]{Ha66}\cite[\S3.4]{Co00}.

The map $\alpha$ maps the degenerate cycles to zero, because a prime cycle of the form $Z\times \square$ for some $Z\subset X\times (\A^1\setminus\{0\})\times \square^{q-1}$ cannot be generically finite over its image under the projection map $\bar\pr: X\times \P^1\times \bar \square^q\ra X$. 
\end{para} 
\bprop 
Let $M\ge 2$ be an integer.
The induced map 
$$\alpha:z_0(X\times \A^1| M(X\times \{0\}),\bullet) \ra 
\Gamma(X,K_X^{-\bullet})$$ 
is a chain map.
\eprop 
\bpf 
We have to show that the following diagram commutes
$$\xymatrix{
z_0(X\times \A^1| M(X\times \{0\}),q)
\ar[r]^(.7){\alpha}\ar[d]_{\partial^{\cyc}}&
\Gamma(X,K_X^{-q})
\ar[d]^{\partial^{\res}}
\\
z_0(X\times \A^1| M(X\times \{0\}),q-1)
\ar[r]^(.7){\alpha}&
\Gamma(X,K_X^{-q+1}).
}$$
Pick a prime cycle $Z\in z_0(X\times \A^1| M(X\times \{0\}),q)$. Let $W$ be the schematic image of $Z$ under the map $\bar \pr:X\times \P^1\times \bar \square^q\ra X$.
To show the commutativity of this diagram, it suffices to show
\beq\label{WTS alpha eq}
\partial^{\res}_{W,w}\circ \alpha'(Z)=
-\Tr^{-q+1}_{\bar{\{w\}}\hra W}\circ \alpha'_w\circ \partial^{\cyc}(Z)
\eeq 
in $\Gamma(W,K_W^{-q+1})$, for all $w\in W_{(q-1)}$. Here 
\beq\label{alphaprime}
\alpha'(Z):= \Tr_{\bar Z\xra{\bar\pr}W}^{-q}(\frac{1}{t}d\log \{y_1,...,y_q\})
\in \Gamma(W,K_W^{-q}).
\eeq
Note that this definition works no matter whether $\bar\pr:\bar Z\ra W$ is generically finite or not: when it is not generically finite, $\alpha'(Z)=0$ because $K_W^{-q}=0$.
For any $z$ being the generic point of an irreducible component of some face of $Z$, 
$$\alpha'_w(\bar{\{z\}}^Z):=
\begin{cases}
\Tr^{-q+1}_{\bar{\{z\}}^{\bar Z}\xra{\bar \pr} \bar{\{w\}}^{W}}(\frac{1}{t}d\log \{y_1,...,y_q\}),
&\text{when $\dim \bar{\{z\}}^Z =q-1$ and $\pr(z)=w$,}
\\
0,&\text{else.}
\end{cases}
$$
It's easy to see that
\benu 
\item 
$\bar\pr: \bar Z\ra W$ being generically finite, and
\item 
$\bar\pr: \bar Z\ra W$ being of generic dimension 1
\eenu 
are the only two nontrivial cases. 
No matter whether $\bar\pr: \bar Z\ra W$ is generically finite or of generic dimension 1, the restricted morphism
$\bar{\{z\}}^{\bar Z} \xra{\bar\pr} \bar{\{w\}}^{W}$
is generically finite, according to the general dimension formula \cite[Proposition 5.6.5]{EGAIV-2}.
(Formally, we can say that $\alpha'$ is a map from the subgroup of $z_0(X\times \A^1| M(X\times \{0\}),q)$ generated by $Z$ to $\Gamma(W,K_W^{-q})$, and 
$\alpha'_w$ is a map from the subgroup of $z_0(X\times \A^1| M(X\times \{0\}),q-1)$ generated by the irreducible components (but equipped with reduced scheme structure) of faces of $Z$ to $\Gamma(\bar{\{w\}}^{W},K_{\bar{\{w\}}^{W}}^{-q+1})$. But we won't need this.
Apparently these $\alpha'$ and $\alpha'_w$ depend on $Z$ and are just extracted from the definition of $\alpha$ to facilitate our computation later on.)

Before diving into the discussion in cases, we do some simplifications that work for all cases.

\begin{align}
\label{partial res circ alpha}
\partial^{\res}_{W,w}\circ \alpha'(Z)
&=
\partial^{\res}_{W,w}\left(
\Tr^{-q}_{\bar Z\xra{\bar \pr}W}(\frac{1}{t}d\log \{y_1,...,y_q\})
\right)
\\
&=
\partial^{\res}_{W,w}\left(
\Tr^{-q}_{\tilde{\bar Z}\xra{\rho_{\bar Z}}\bar Z\xra{\bar \pr}W}
(\frac{1}{t}d\log \{y_1,...,y_q\})
\right)
\nonumber
\\
&=
\sum_{\tilde{\bar z}\in (\bar \pr \circ \rho_{\bar Z})^{-1}(w)}
\Tr^{-q+1}_{\tilde{\bar Z}\xra{\rho_{\bar Z}}\bar Z\xra{\bar \pr}W}
\left(
\partial^{\res}_{\tilde{\bar Z},\tilde{\bar z}}
(\frac{1}{t}d\log \{y_1,...,y_q\})
\right).
\nonumber
\end{align}
The first equality is just the definition of $\alpha'$.
The second equality holds because the trace map $\Tr_{\tilde{\bar Z}\xra{\rho_{\bar Z}}\bar Z}$ is the identity map at degree $-q$. The third equality holds because the trace map $\Tr_{\tilde{\bar Z}\xra{\rho_{\bar Z}}\bar Z\xra{\bar \pr}W}: K_{\tilde{\bar Z}}\ra K_W$ is a chain map.

Before the next calculation, observe that for any $z\in Z^{(1)}$, it can lie in at most one face. This is because $Z$ intersects all faces properly: if $z$ lies in two faces, its codimension is at least 2.
So when $z\in Z^{(1)}$ lies in some face, it makes sense to write $z\in \{y_{i(z)}=\epsilon(z)\}$ for some $i(z)\in \{1,...,q\}$ and $\epsilon(z)\in\{0,\infty\}$. 
When we write $i(\tilde z)$ for some $\tilde z\in \tilde Z$, we mean $i(\rho_Z(\tilde z))$. Similarly the symbol $\epsilon(\tilde z)$ might also occur.
When $z\in Z^{(1)}$ does not lie in any face, define the symbols $\ord_z(y_{i(z)})$ and $\rmv_{\tilde z}(y_{i(\tilde z)})$ to be zero. 
Now calculate $\alpha'_w\circ \partial^{\cyc}(Z)$:
% \textit{Case 1.}
% When $\bar \pr: \bar Z\ra W$ is generically finite.
\begin{align}
\label{alpha circ partial cyc}
\alpha'_w\circ \partial^{\cyc}(Z)
&=
\alpha'_w\left(
\sum_{i=1}^q (-1)^i
[(\id_X\times \iota_{i,\infty}^q)^*(Z)-(\id_X\times \iota_{i,0}^q)^*(Z)]
\right)
\\
&=
\alpha'_w\left(
\sum_{i=1}^q (-1)^{i-1}
\div(y_i)
\right)
\nonumber
\\
&=
\alpha'_w\left(
\sum_{i=1}^q (-1)^{i-1}
\left[
\sum_{z\in Z^{(1)}}
\ord_z(y_{i})\cdot \bar{\{z\}}^Z
\right]
\right)
\nonumber
\\
&=
\sum_{z\in \pr^{-1}(w)\cap Z^{(1)}}
(-1)^{i(z)-1} \ord_z(y_{i(z)}) 
\Tr^{-q+1}_{\bar{\{z\}}^{\bar Z}\xra{\bar \pr} \bar{\{w\}}^W}
(\frac{1}{t}d\log \{y_1,...,\hat{y_{i(z)}},...,y_q\})
\nonumber
\\
&=
\sum_{z\in \pr^{-1}(w)\cap Z^{(1)}}
(-1)^{i(z)-1} \left(
\sum_{\tilde{z}\in \rho_{Z}^{-1}(z)}
\rmv_{\tilde z}(y_{i(z)})[k(\tilde z):k(z)]
\right)
\nonumber
\\
&\qquad\qquad\qquad\qquad\qquad
\cdot
\Tr^{-q+1}_{\bar{\{z\}}^{\bar Z}\xra{\bar \pr} \bar{\{w\}}^W}
(\frac{1}{t}d\log \{y_1,...,\hat{y_{i(z)}},...,y_q\})
\nonumber
\\
&=
\sum_{z\in \pr^{-1}(w)\cap Z^{(1)}}
(-1)^{i(z)-1}
\sum_{\tilde{z}\in \rho_{Z}^{-1}(z)}
\rmv_{\tilde z}(y_{i(z)})
\Tr^{-q+1}_{\bar{\{\tilde z\}}^{\tilde{\bar Z}}\xra{\bar \pr\circ \rho_{\bar Z}} \bar{\{w\}}^W}
(\frac{1}{t}d\log \{y_1,...,\hat{y_{i(z)}},...,y_q\})
\nonumber
\\
&=
\sum_{\tilde{z}\in (\pr\circ \rho_{Z})^{-1}(w)\cap \tilde Z^{(1)}}
(-1)^{i(\tilde z)-1}
\rmv_{\tilde z}(y_{i(\tilde z)})
\Tr^{-q+1}_{\bar{\{\tilde z\}}^{\tilde{\bar Z}}\xra{\bar \pr\circ \rho_{\bar Z}} \bar{\{w\}}^W}
(\frac{1}{t}d\log \{y_1,...,\hat{y_{i(\tilde z)}},...,y_q\}).
\nonumber
\end{align}
The first four equalities hold by definition of the symbols involved. 
The 5th equality holds by \cite[Example 1.2.3]{Fulton-Intersection}. 
The 6th one holds because the trace map $\Tr_{\bar{\{\tilde z\}}^{\tilde{\bar Z}} \xra{\rho_{\bar Z}} \bar{\{z\}}^{\bar Z}}$ of the finite morphism $\bar{\{\tilde z\}}^{\tilde{\bar Z}} \xra{\rho_{\bar Z}} \bar{\{z\}}^{\bar Z}$ of dimension $(q-1)$-schemes at degree $-q+1$ is given by the classical trace map $\Omega^{q-1}_{k(\tilde z)}\ra \Omega^{q-1}_{k(z)}$. 
% (this is because both trace maps are dual to the pullback map between the structure sheaves
% % \footnote{
% % {\color{red}I'm slightly cheating here.} I did not think deep but I think it holds (i.e. on the top degree the Grothendieck trace = the classical trace for top differentials = the trace map of finite extension of fields times the top form (i.e. base as a rank 1 module)). Will come back to this later.
% % (This should be true for any finite free ring extensions for smooth rings over $k$!)}
% ).
Since the element $\frac{1}{t}d\log \{y_1,...,\hat{y_{i(z)}},...,y_q\} \in \Omega^{q-1}_{k(\tilde z)}$ comes from the pullback of the differential form $\frac{1}{t}d\log \{y_1,...,\hat{y_{i(z)}},...,y_q\} \in \Omega^{q-1}_{k(z)}$, the trace of it is given by multiplication by the degree of the field extension $k(z)\subset k(\tilde z)$.
The 7th equality is just a recollection.

We will use the following decomposition of the underlying set of $\bar\pr^{-1}(w)$:
\begin{align*}
\bar\pr^{-1}(w)=&
\underbrace{\left(\bar\pr^{-1}(w)\cap Z\right)}_{a)}\cup
\underbrace{\left(\bar\pr^{-1}(w)\cap \{t=0\}\right)}_{b)}\cup
\\
&\qquad\qquad\qquad
\underbrace{(\bar\pr^{-1}(w)\cap \big(\bigcup_{i=1}^q \{y_i=1\}\setminus \{t=0\text{ or }\infty\}\big) )}_{c)}\cup
\underbrace{\left(\bar\pr^{-1}(w)\cap\{t=\infty\}\right)}_{d)}
.
\end{align*}

\begin{enumerate}[label=\alph*)]
\item 
\textit{Good position.}
Let $z\in \bar\pr^{-1}(w)\cap Z$. Then if $z$ lies in some face, it lies in at most one face $\{y_{i(z)}=\epsilon(z)\}$ as we discussed above. 
For $\tilde{\bar z}\in \tilde{\bar Z}$, denote by $\pi_{\tilde{\bar z}}$ the uniformizer of the discrete valuation ring $\cO_{\tilde{\bar Z},\tilde{\bar z}}$.
For a fixed $i$, one has $y_i=u(i,\tilde{\bar z})\cdot \pi_{\tilde{\bar z}}^{\rmv_{\tilde{\bar z}}(y_i)}$ for some $u(i,\tilde{\bar z})\in \cO_{\tilde{\bar Z},\tilde{\bar z}}^*$.

% \textit{Case 1.}
% When $\bar \pr: \bar Z\ra W$ is generically finite.

Continuing the calculation of  (\ref{partial res circ alpha}) over the points $\tilde{\bar z}\in (\bar \pr \circ \rho_{\bar Z})^{-1}(w)\cap \tilde Z$:
\begin{align*}
&\quad
\sum_{\tilde{\bar z}\in (\bar \pr \circ \rho_{\bar Z})^{-1}(w)\cap \tilde Z^{(1)}}
\Tr^{-q+1}_{\tilde{\bar Z}\xra{\rho_{\bar Z}}\bar Z\xra{\bar \pr}W}
\left(
\partial^{\res}_{\tilde{\bar Z},\tilde{\bar z}}
(\frac{1}{t}d\log \{y_1,...,y_q\})
\right)
\\
&=
\sum_{\tilde{\bar z}\in (\bar \pr \circ \rho_{\bar Z})^{-1}(w)\cap \tilde Z^{(1)}}
(-1)^{i(\tilde{\bar z})-1}
\Tr^{-q+1}_{\tilde{\bar Z}\xra{\rho_{\bar Z}}\bar Z\xra{\bar \pr}W}
\left(
\partial^{\res}_{\tilde{\bar Z},\tilde{\bar z}}
(\frac{1}{t}d\log \{y_{i(\tilde{\bar z})},y_1,...,\hat{y_{i(\tilde{\bar z})}},...,y_q\})
\right)
\\
&=
\sum_{\tilde{\bar z}\in (\bar \pr \circ \rho_{\bar Z})^{-1}(w)\cap \tilde Z^{(1)}}
(-1)^{i(\tilde{\bar z})-1}
\Tr^{-q+1}_{\tilde{\bar Z}\xra{\rho_{\bar Z}}\bar Z\xra{\bar \pr}W}
\left(
\partial^{\res}_{\tilde{\bar Z},\tilde{\bar z}}
(\frac{1}{t}
d\log \{\pi_{\tilde{\bar z}}^{\rmv_{\tilde{\bar z}}(y_{i(\tilde{\bar z})})}
,y_1,...,\hat{y_{i(\tilde{\bar z})}},...,y_q\})
\right)
\\
&=
\sum_{\tilde{\bar z}\in (\bar \pr \circ \rho_{\bar Z})^{-1}(w)\cap \tilde Z^{(1)}}
(-1)^{i(\tilde{\bar z})-1}
\rmv_{\tilde{\bar z}}(y_{i(\tilde{\bar z})})
\Tr^{-q+1}_{\tilde{\bar Z}\xra{\rho_{\bar Z}}\bar Z\xra{\bar \pr}W}
\left(
\left[
  \begin{array}{c}
   \frac{1}{t} d\pi_{\tilde{\bar z}}\wedge
    d\log \{y_1,...,\hat{y_{i(\tilde{\bar z})}},...,y_q\}
   \\
   \pi_{\tilde{\bar z}}\\
  \end{array}
\right]
\right)
\\
&=
\sum_{\tilde{\bar z}\in (\bar \pr \circ \rho_{\bar Z})^{-1}(w)\cap \tilde Z^{(1)}}
(-1)^{i(\tilde{\bar z})}
\rmv_{\tilde{\bar z}}(y_{i(\tilde{\bar z})})
\Tr^{-q+1}_{\tilde{\bar Z}\xra{\rho_{\bar Z}}\bar Z\xra{\bar \pr}W}
\Tr^{-q+1}_{\bar{\{\tilde z\}}^{\tilde{\bar Z}}\hra \tilde{\bar Z}}
(\frac{1}{t}d\log \{y_1,...,\hat{y_{i(\tilde{\bar z})}},...,y_q\})
\\
&=
\sum_{\tilde{\bar z}\in (\bar \pr \circ \rho_{\bar Z})^{-1}(w)\cap \tilde Z^{(1)}}
(-1)^{i(\tilde z)}
\rmv_{\tilde z}(y_{i(\tilde{\bar z})})
\Tr^{-q+1}_{\bar{\{w\}}^W\hra W}
\Tr^{-q+1}_{\bar{\{\tilde z\}}^{\tilde{\bar Z}}\xra{\bar \pr\circ \rho_{\bar Z}} \bar{\{w\}}^W}
(\frac{1}{t}d\log \{y_1,...,\hat{y_{i(\tilde{\bar z})}},...,y_q\}).
\end{align*}
The 1st equality is just a reordering and thus produces a sign.
The 2nd equality holds because $\frac{1}{t}
d\log \{u(i(\tilde{\bar z}),\tilde{\bar z})
,y_1,...,\hat{y_{i(\tilde{\bar z})}},...,y_q\}$ is regular at point $\tilde{\bar z}$ and thus 
$\partial^{\res}_{\tilde{\bar Z},\tilde{\bar z}}
(\frac{1}{t}
d\log \{u(i(\tilde{\bar z}),\tilde{\bar z})
,y_1,...,\break\hat{y_{i(\tilde{\bar z})}},...,y_q\})=0$.
The 3rd equality is given by \cite[Lemma A.1.2]{CR11}.
The 4th equality is given by \cite[Lemma A.2.12]{CR11}.
The last equality is just the compatibility of the trace map with composition of proper morphisms. Compare this last line to (\ref{alpha circ partial cyc}), we know that we have arrived at the equality (\ref{WTS alpha eq}) above the points in $\rho_{\bar Z}^{-1}(\bar\pr^{-1}(w)\cap Z)$.

% \textit{Case 2.}
% When $\bar \pr: \bar Z\ra W$ is not generically finite.

\item 
\textit{Modulus condition.}
When $\tilde{\bar z}\in\rho_{\bar Z}^{-1}(\bar\pr^{-1}(w)\cap \{t=0\})$, 
$\alpha'_w\circ \partial^{\cyc}(Z)$ has no summand supported on this point. We thus have to show that the summand of $\partial^{\res}_{W,w}\circ \alpha'(Z)$ supported on point $\tilde{\bar z}$ vanishes. 
For this one needs the modulus condition \eqref{defn:modulus-cycle1}: since $\tilde{\bar z}$ is a zero of $t$, it must lie in some $\{y_i=1\}$ for at least one $i\in \{1,...,q\}$. 
Reordering the indices (which might create a sign but does not affect the vanishing), we assume that $\tilde{\bar z}\in\{y_i=1\}$ for $i\in\{1,,...,r\}$ for some $1\le r\le q$, and $\tilde{\bar z}\notin\{y_i=1\}$ for $i\in\{r+1,...,q\}$. 
So we write
$$y_i-1=u_i\cdot \pi_{\tilde{\bar z}}^{m_i}, \quad
u_i\in \cO_{\tilde{\bar Z},\tilde{\bar z}}^*,\quad
m_i\ge 1,\quad i\in\{1,,...,r\},
$$
$$y_i=u_i\cdot \pi_{\tilde{\bar z}}^{m_i}, \quad
u_i\in \cO_{\tilde{\bar Z},\tilde{\bar z}}^*,\quad
m_i\ge 0,\quad i\in\{r+1,,...,q\},$$
$$t=u\cdot \pi_{\tilde{\bar z}}^{m},\quad
u_i\in \cO_{\tilde{\bar Z},\tilde{\bar z}}^*,\quad
m\ge 1.$$
In $\Omega^q_{\tilde{\bar Z},\eta_{\tilde{\bar Z}}}$,
\begin{align*} 
\frac{1}{t}d\log \{y_1,...,y_q\}=
\frac{1}{u \pi_{\tilde{\bar z}}^{m}}
\frac{d(1+u_1 \pi_{\tilde{\bar z}}^{m_1})}{1+u_1 \pi_{\tilde{\bar z}}^{m_1}}
\wedge\dots\wedge
\frac{d(1+u_r \pi_{\tilde{\bar z}}^{m_r})}{1+u_r \pi_{\tilde{\bar z}}^{m_r}}
\wedge
\frac{d(u_{r+1} \pi_{\tilde{\bar z}}^{m_{r+1}})}{u_{r+1} \pi_{\tilde{\bar z}}^{m_{r+1}}}
\wedge\dots\wedge
\frac{d(u_q \pi_{\tilde{\bar z}}^{m_q})}{u_q \pi_{\tilde{\bar z}}^{m_q}}
\end{align*}
Since $\frac{1}{u}\frac{1}{1+u_1 \pi_{\tilde{\bar z}}^{m_1}}
\wedge\dots\wedge
\frac{1}{1+u_r \pi_{\tilde{\bar z}}^{m_r}}$
is regular at point $\tilde{\bar z}$, we are only concerned with the part
\eq{eqmoduluscase}{\frac{1}{\pi_{\tilde{\bar z}}^{m}}
d(1+u_1 \pi_{\tilde{\bar z}}^{m_1})\wedge\dots\wedge
d(1+u_r \pi_{\tilde{\bar z}}^{m_r})\wedge
\frac{d(u_{r+1} \pi_{\tilde{\bar z}}^{m_{r+1}})}{u_{r+1} \pi_{\tilde{\bar z}}^{m_{r+1}}}
\wedge\dots\wedge
\frac{d(u_q \pi_{\tilde{\bar z}}^{m_q})}{u_q \pi_{\tilde{\bar z}}^{m_q}}.
}
Since $d(u_i \pi_{\tilde{\bar z}}^{m_i})= 
\pi_{\tilde{\bar z}}^{m_i}du_i+ m_iu_i\pi_{\tilde{\bar z}}^{m_i-1}d\pi_{\tilde{\bar z}}$ and any two terms with $d\pi_{\tilde{\bar z}}$ in the wedge product cancel out, we deduce that \eqref{eqmoduluscase} is the product of 
$\pi_{\tilde{\bar z}}^{(m_1+...+m_r)-1-m}$
with a regular differential form. 
The modulus condition \eqref{defn:modulus-cycle1} states
$$Mm\le m_1+...+m_r.$$
As $m+1\le Mm$, $\frac{1}{t}d\log \{y_1,...,y_q\}$ has no pole at point $\tilde{\bar z}$. 
In other words, $\frac{1}{t}d\log \{y_1,...,y_q\}\in \Omega^q_{\tilde{\bar Z},\tilde{\bar z}}$, so $\partial^{\res}_{\tilde{\bar Z},\tilde{\bar z}}(\frac{1}{t}d\log \{y_1,...,y_q\})=0$.

\item 
\textit{Zero symbols.}
When $\tilde{\bar z}\in\rho_{\bar Z}^{-1}(\bar\pr^{-1}(w)\cap \big(\bigcup_{i=1}^q \{y_i=1\}\setminus \{t=0\text{ or }\infty\}\big) )$, 
$\alpha'_w\circ \partial^{\cyc}(Z)$ have no summand supported on this point. So the aim is still to show that the summand of $\partial^{\res}_{W,w}\circ \alpha'(Z)$ supported on point $\tilde{\bar z}$ vanishes. This is true because the symbol $\{y_1,...,y_q\}\in K^M_q(k(\eta_{\tilde{\bar Z}}))$ is zero when one of the $y_i$ is 1. (It's actually already zero in the tensor product $k(\eta_{\tilde{\bar Z}})^*\otimes\cdots\otimes k(\eta_{\tilde{\bar Z}})^*$ before modding out the Steinberg relations.)

\item 
\textit{A priori regular locus.}
When $\tilde{\bar z}\in\rho_{\bar Z}^{-1}(\bar\pr^{-1}(w)\cap \{t=\infty\})$, $\alpha'_w\circ \partial^{\cyc}(Z)$ still have no summand supported on this point. We claim that 
$\frac{1}{t}d\log \{y_1,...,y_q\}\in \Omega^q_{\tilde{\bar Z},\tilde{\bar z}}$. This is because of the property of log poles: the rational form $d\log \{y_1,...,y_q\}$ has at most a pole of order 1 at point $\tilde{\bar z}$. And $\frac{1}{t}$ has at least a zero of order 1, thus the whole form $\frac{1}{t}d\log \{y_1,...,y_q\}$ is regular at point $\tilde{\bar z}$.
\end{enumerate}

To summarize: 
\begin{align}
\label{summerize alpha eq}
-\Tr^{-q+1}_{\bar{\{w\}}^W\hra W}\circ\alpha'_{w}&\circ \partial^{\cyc}(Z)
\\
\nonumber
&=
\sum_{\tilde{\bar z}\in (\bar \pr\circ \rho_{\bar Z})^{-1}(w)\cap \tilde Z}
(-1)^{i(\tilde{\bar z})}
\rmv_{\tilde{\bar z}}(y_{i(\tilde{\bar z})})
\Tr^{-q+1}_{\bar{\{\tilde{\bar z}\}}^{\tilde{\bar Z}}\xra{\bar \pr\circ \rho_{\bar Z}} W}
(\frac{1}{t}d\log \{y_1,...,\hat{y_{i(\tilde{\bar z})}},...,y_q\})
\\
\nonumber
&=
\sum_{\tilde{\bar z}\in (\bar \pr\circ \rho_{\bar Z})^{-1}(w)\cap \tilde Z}
\Tr^{-q+1}_{\tilde{\bar Z}\xra{\bar \pr\circ \rho_{\bar Z}} W}
\left(\partial^{\res}_{\tilde{\bar Z},\tilde{\bar z}}(\frac{1}{t}d\log \{y_1,...,y_{i(\tilde{\bar z})},...,y_q\})\right)
\text{\quad (by a))}
\nonumber
\\
&=
\sum_{\tilde{\bar z}\in (\bar \pr\circ \rho_{\bar Z})^{-1}(w)\cap \tilde Z}+
\sum_{\tilde{\bar z}\in (\bar \pr\circ \rho_{\bar Z})^{-1}(w)\cap \{t=0\}}+
\nonumber
\\
&\qquad
\sum_{\tilde{\bar z}\in (\bar \pr\circ \rho_{\bar Z})^{-1}(w)\cap
\left(
\bigcup_{i=1}^q \{y_i=1\}\setminus\{t=0\text{ or }\infty\}
\right)
}+
\sum_{\tilde{\bar z}\in (\bar \pr\circ \rho_{\bar Z})^{-1}(w)\cap \{t=\infty\}}
\text{\quad (by b) c) d))}
\nonumber
\\
&=
\sum_{\tilde{\bar z}\in (\bar \pr\circ \rho_{\bar Z})^{-1}(w)}
\Tr^{-q+1}_{\tilde{\bar Z}\xra{\bar \pr\circ \rho_{\bar Z}} W}
\left(\partial^{\res}_{\tilde{\bar Z},\tilde{\bar z}}(\frac{1}{t}d\log \{y_1,...,y_{i(\tilde{\bar z})},...,y_q\})\right)
\nonumber
\\
&=
\partial^{\res}_{W,w}\circ \alpha'(Z).
\nonumber
\end{align}
Thus we establish (\ref{WTS alpha eq}).

\epf 

\brmk 
\begin{enumerate} 
\item
\label{rmk on trace vs reciprocity}
Compare the proof presented here and the proof of \cite[Lemma 3.1, 2nd case]{RS18}, one might wonder why the reciprocity property doesn't appear here. In fact, the last equality in (\ref{partial res circ alpha}) is a reciprocity phenomenon.
To see this, 
consider the case when $\bar \pr:\bar Z\ra W$ is not generically finite and $\dim W=q-1$. Take $w=\eta_W$ in this case. Then $(\bar \pr \circ \rho_{\bar Z})^{-1}(\eta_W)$ is a regular proper curve over $\eta_W$ (thus also projective), and 
$\partial^{\res}_{W,w}\circ \alpha'(Z)=0$ 
 since $K_W^{-q}=0$. 
Hence (\ref{partial res circ alpha}) gives
$$\sum_{\tilde{\bar z}\in (\bar \pr \circ \rho_{\bar Z})^{-1}(\eta_W)}
\Tr^{-q+1}_{\tilde{\bar Z}\xra{\rho_{\bar Z}}\bar Z\xra{\bar \pr}W}
\left(
\partial^{\res}_{\tilde{\bar Z},\tilde{\bar z}}
(\frac{1}{t}d\log \{y_1,...,y_q\})
\right)=0.$$
This is precisely the reciprocity statement appearing in \textit{loc. cit.}.
\item 
Note that the map $\alpha$ factors through the cycle complex with modulus with $M=2$.
\end{enumerate} 
\ermk

\begin{theorem}
\label{thm}
Let $M\ge 2$ be an integer.
The map 
$$\alpha:\Z(d+1)_{X\times \A^1 \mid M\cdot(X\times \{0\})}[2d+2]\ra K_X$$
is a chain map between complexes of \'etale sheaves.
\end{theorem}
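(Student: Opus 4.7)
\emph{Proof plan.} The preceding proposition already establishes the chain map property at the level of sections over $X$. The plan is to apply that proposition to each étale neighborhood $U\to X$ and assemble the resulting maps into a morphism of complexes of étale sheaves.

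First I would note that both sides are complexes of étale sheaves on $X$: for the source this was recorded in \Cref{Rec}, and for the target, since $\pi^! = \pi^*$ for $\pi$ étale, the residual complex $K_X$ restricts canonically to $K_U$ along any étale $f:U\to X$. For each such $f$ I would define $\alpha_U$ by the same formula as in the proof of the previous proposition, using closures in $U\times\P^1\times\bar\square^q$ and the Serre-Grothendieck trace along the resulting proper morphism $\bar Z \to W \hra U$. Applying the preceding proposition to the $k$-scheme $U$ (which is again separated of finite type over $k$) yields that each $\alpha_U$ is a chain map.

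The remaining point is naturality of the collection $\{\alpha_U\}$ in étale restrictions. This will rest on three compatibilities: (i) formation of the closure $\bar Z$ and of its normalization $\tilde{\bar Z}$ commutes with étale base change; (ii) the modulus condition \eqref{defn:modulus-cycle1} is preserved under étale pullback of cycles, which was already noted in \Cref{Rec}; and (iii) the trace map in coherent duality is compatible with flat base change, a standard property, see \cite[VI.\S4]{Ha66} and \cite[\S3.4]{Co00}. The main obstacle I expect is making (iii) explicit in the form needed here, namely for the trace along the composite $\bar Z\to W\hra X$ rather than a single proper morphism; once it is verified that the étale pullback along $U\to X$ of $\Tr^{-q}_{\bar Z\to W\hra X}(\tfrac{1}{t}\dlog\{y_1,\dots,y_q\})$ agrees with the corresponding trace formed inside $U$, the theorem follows by combining this with the pointwise chain map property already in hand.
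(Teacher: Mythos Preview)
Your proposal is correct and follows essentially the same approach as the paper: the paper's proof reduces precisely to verifying the \'etale naturality square $g^*\circ\alpha_X=\alpha_U\circ(g\times\id)^*$ and establishes it via the compatibility of the coherent trace with \'etale base change, which is exactly your point (iii). The only cosmetic difference is that the paper works directly with the fiber products $\bar Z_U:=\bar Z\times_X U$ and $W_U:=W\times_X U$ rather than phrasing things in terms of closures commuting with \'etale base change, but these amount to the same thing here.
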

\begin{proof}
Let $g:U\ra X$ be an \'etale morphism. 
We want to show that the following diagram commutes:
\eq{etalecomm}{
\xymatrix{
z_0(X\times \A^1| M(X\times \{0\}),q)
\ar[r]^(.7){\alpha}\ar[d]_{(g\times \id)^*}&
\Gamma(X,K_X^{-q})
\ar[d]^{g^*}
\\
z_0(U\times \A^1| M(U\times \{0\}),q)
\ar[r]^(.7){\alpha}&
\Gamma(U,K_X^{-q})
.}
}
Pick a prime cycle $Z\in z_0(X\times \A^1| M(X\times \{0\}),q)$. 
Let $W$ be the schematic image of $Z$ under the map $\bar \pr:X\times \P^1\times \bar \square^q\ra X$.
Let $\eta_1,\dots, \eta_n$ be the generic points of the $q$-dimensional irreducible components of $W_U:=W\times_X U$. 
For each $i=1,\dots,n$, let $\eta_{i,1},\dots,\eta_{i,m_i}$ be the
generic points of the $q$-dimensional irreducible components of 
$\bar Z_U:=\bar Z\times_X U$ which map to $\eta_i$ under $\bar\pr_U$.
We have a cartesian diagram
$$\xymatrix{
*+[l]{
U\times \P^1\times \bar \square^q 
\supset \bar Z_U}
\ar[r]^{(g\times \id)^*}\ar[d]_{\bar \pr_U}
&
*+[r]{\bar Z\subset X\times \P^1\times \bar \square^q}
\ar[d]^{\bar \pr}
\\
*+[l]{U\supset W_U}
\ar[r]^g
&*+[r]{W\subset X.}
}$$
To show the commutativity of \eqref{etalecomm}, it suffices to show
\beq\label{307}
g^*\circ \alpha'(Z)=
\sum_{i=1}^n \sum_{j=1}^{m_i} 
\alpha'_{\eta_{i,j}}\circ (g\times \id)^*(Z)
\eeq 
in $\Gamma(W_U,K_{W_U}^{-q})$.
Here $\alpha'$ is defined in \eqref{alphaprime}, and
$$\alpha_{\eta_{i,j}}'(\bar{\{\eta_{i,j}\}}):= 
\Tr_{\bar{\{\eta_{i,j}\}}\xra{\bar\pr_U}\bar{\{\eta_{i}\}}}^{-q}(\frac{1}{t}d\log \{y_1,...,y_q\})
\in \Gamma(W_U,K_{W_U}^{-q}). 
$$
When $\bar \pr:\bar Z\ra W$ is not generically finite, $\Gamma(W_U,K_{W_U}^{-q})=0$ and hence we have the commutativity.
When $\bar \pr:\bar Z\ra W$ is generically finite, 
$\Gamma(W_U,K_{W_U}^{-q})=\bigoplus_{i=1}^n \Omega^q_{k(\eta_i)/k}$, and
\begin{align*}
g^*\circ \alpha'(Z)
&=
g^*\left(\Tr_{\bar Z\xra{\bar\pr}W}^{-q}(\frac{1}{t}d\log \{y_1,...,y_q\})\right)
\stackrel{(*)}{=}\Tr_{\bar Z_U\xra{\bar\pr_U}W_U}^{-q}(\frac{1}{t}
d\log \{y_1,...,y_q\})
\\
&=\sum_{i=1}^n \sum_{j=1}^{m_i} 
\Tr_{\bar{\{\eta_{i,j}\}}\xra{\bar\pr_U}\bar{\{\eta_{i}\}}}^{-q}(\frac{1}{t}d\log \{y_1,...,y_q\})
=
\sum_{i=1}^n \sum_{j=1}^{m_i} 
\alpha'_{\eta_{i,j}}\circ (g\times \id)^*(Z)
\end{align*}
The equality $(*)$ follows from the compatibility of the trace map of residual complexes and \'etale pullbacks.
The remaining equalities hold by definition.
Therefore \eqref{307} follows.
\end{proof}

\begin{para}  
\label{phiprime}
Recall the map $\zeta$ from \cite[\S5.1]{RenThesis}
and $\phi':=\phi'(0)$ from \Cref{phiprimer}. 
Since the norm map from Milnor $K$-theory is compatible with the trace map from coherent duality theory \cite[Lemma 5.3]{RenThesis}, we have the following explicit formula for the composite map $\zeta\circ\varphi':
\Z^{\rm cube}_X:=\Z(d)_{X|0}[2d]\ra K_{X,log}:=\Cone(C-1:K_X\ra K_X)[-1]$: 
\begin{align*}
\zeta\circ \varphi': 
\uline z_0(X,q)
&\ra 
\Gamma(X,K_X^{-q})\oplus\Gamma(X,K_X^{-q-1})
\\
Z&\mapsto
\begin{cases}
\left(
(-1)^q\Tr_{\bar Z\xra{\bar\pr}W\hra X}^{-q}(d\log \{y_1,...,y_q\}),0
\right),&
\text{$\bar Z\xra{\bar \pr} W$ is gen. fin.}\\
(0,0),&
\text{$\bar Z\xra{\bar \pr} W$ is not gen. fin.}
\end{cases}
\end{align*}
Here $Z$ is a prime cycle of dimension $q$, and $W$ is the schematic image of $\bar Z$ under the map $\bar\pr:X\times \P^1\times\bar \square^q\ra X$.
\end{para}
\begin{prop}
\label{Compatibility}
Suppose $k$ is a perfect field of positive characteristic $p$. 
\footnote{The perfectness of $k$ is only needed to have the Cartier operator $C:K_X\ra K_X$ so that $K_{X,\log}$ can be defined. See \cite[\S1.2]{RenThesis}.}
The map $\alpha$ is compatible with $\zeta\circ \varphi':\Z^{\rm cube}_X\ra K_{X,log}$.
Namely,
\eq{CompDiag}{\xymatrix{
\Z^{\rm cube}_X
\ar[r]^(0.27){i_{1*}}
\ar[d]_{\zeta\circ \varphi'}&
\Z(d+1)_{X\times \A^1 \mid M(X\times \{0\})}[2d+2]
\ar[d]^{\alpha}
\\
K_{X,log}
\ar[r]^{(a,b)\mapsto a}
&K_X.
}}
is a commutative diagram of \'etale sheaves.
\end{prop}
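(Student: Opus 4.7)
The plan is to verify commutativity of the square \eqref{CompDiag} termwise on prime cycles, using the explicit formulas for $\alpha$ and for $\zeta\circ\varphi'$ given in the definition of $\alpha$ and in \Cref{phiprime}. At cohomological degree $-q$, the top row is the pushforward
\[
i_{1*}\colon z_0(X,q)\longrightarrow z_0(X\times\A^1 \mid M(X\times\{0\}),q)
\]
which sends a $q$-dimensional prime cycle $Z\subset X\times\square^q$ to $Z\times\{1\}$, regarded as a prime cycle in $(X\times(\A^1\setminus\{0\}))\times\square^q$. Its closure in $X\times\P^1\times\bar\square^q$ is $\bar Z\times\{1\}$, where $\bar Z$ is the closure of $Z$ in $X\times\bar\square^q$, and it has the same schematic image $W$ in $X$ as $\bar Z$.

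First I would handle the trivial case: when $\bar Z\to W$ is not generically finite, neither is $\bar Z\times\{1\}\to W$, so both $\alpha(i_{1*}(Z))$ and $\zeta\circ\varphi'(Z)$ vanish (by the second branches of their defining formulas). Assume therefore that $\bar Z\to W$ is generically finite. The key observation is that $t$ restricts to the constant $1$ on $\bar Z\times\{1\}$, so the rational form $\tfrac{1}{t}\,d\log\{y_1,\ldots,y_q\}$ pulls back along the closed immersion $\bar Z\times\{1\}\hookrightarrow\bar Z\times\P^1\times\bar\square^q$ to $d\log\{y_1,\ldots,y_q\}$; under the canonical isomorphism $\bar Z\times\{1\}\xrightarrow{\sim}\bar Z$ this is precisely the form appearing in $\zeta\circ\varphi'(Z)$.

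Second, by compatibility of the Serre--Grothendieck trace with composition of proper morphisms (together with the fact that the trace of an isomorphism is the identity), I would rewrite
\[
\alpha(i_{1*}(Z))\;=\;(-1)^q\,\Tr^{-q}_{\bar Z\times\{1\}\to W\hookrightarrow X}\!\left(\tfrac{1}{t}\,d\log\{y_1,\ldots,y_q\}\right)\;=\;(-1)^q\,\Tr^{-q}_{\bar Z\to W\hookrightarrow X}\!\left(d\log\{y_1,\ldots,y_q\}\right),
\]
which is exactly the first coordinate of $\zeta\circ\varphi'(Z)$ as recorded in \Cref{phiprime}. Since both $\alpha$ and $\zeta\circ\varphi'$ carry the same sign $(-1)^q$, no further sign bookkeeping is required, and the two compositions agree on every prime cycle, establishing commutativity of \eqref{CompDiag} at each cohomological degree.

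I do not expect a genuine obstacle: the compatibility is essentially a direct unraveling of the defining formulas, and the core point is the cancellation $\tfrac{1}{t}\mapsto 1$ under restriction to $\{t=1\}$. The only ingredient that may deserve a line of justification is that $i_{1*}$ is well defined at the level of modulus cycle complexes---i.e.\ that $Z\times\{1\}$ satisfies the modulus condition relative to $M(X\times\{0\})$---but this is \cite[Lemma 2.7(1)]{BS19}, already cited in the introduction.
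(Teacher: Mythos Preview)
Your proposal is correct and takes essentially the same approach as the paper: the paper's proof simply states that commutativity ``follows directly from the construction of the maps involved,'' and your argument is precisely the unraveling of that construction---the key point being that $t\equiv 1$ on $i_{1*}(Z)$, so $\tfrac{1}{t}\,d\log\{y_1,\ldots,y_q\}$ becomes $d\log\{y_1,\ldots,y_q\}$ and the traces coincide.
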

\begin{proof}
We need to show that the following diagram commutes:
$$\xymatrix{
\uline z_0(X,q)
\ar[r]^(0.4){i_{1*}}\ar[d]_{\zeta\circ \varphi'}
&
\uline z_0(X\times \A^1| M(X\times \{0\}),q)
\ar[d]^{\alpha}
\\
\Gamma(X,K_X^{-q})\oplus\Gamma(X,K_X^{-q-1})
\ar[r]^(0.6){(a,b)\mapsto a}
&
\Gamma(X,K_X^{-q}).
}$$
This follows directly from the construction of the maps involved.
\end{proof}

\begin{corollary}
\label{injectivity}
Let $k$ be an algebraically closed field of positive characteristic $p$.
Then the following diagram commutes, and the two horizontal maps in the diagram are injections
$$\xymatrix{
\CH_0(X,q; \Z/p) \ar[r]^(0.33){i_{1*}}\ar@{=}[d] &
\CH_0(X\times \A^1| M(X\times {0}), q; \Z/p)\ar[d]
\\
H^{2d-q}_{\cM,\et}(X_\et, \Z/p(d)) \ar[r]^(0.33){i_{1*}} &
H^{2d+2-q}_{\cM,\et}({X\times \A^1| M(X\times {0})}, \Z/p(d+1)).
}$$
\end{corollary}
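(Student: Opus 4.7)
The plan is to prove both horizontal injections by factoring them through the coherent cohomology group $H^{-q}(X_\et, K_X)$, using the commutative square \eqref{CompDiag} of \Cref{Compatibility} together with the Artin--Schreier-type surjectivity of the Cartier operator available over $\bar k$.

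First, I would take étale hypercohomology of \eqref{CompDiag} after reducing mod $p$. By the main theorem of \cite{RenThesis} (whose cubical variant uses \Cref{Zhongcubical} together with the compatibility of $\zeta$ with $\varphi'$ recalled in \Cref{phiprime}), the left vertical $\zeta\circ\varphi'\colon \Z^{\rm cube}_X/p \to K_{X,\log}$ is a quasi-isomorphism of complexes of étale sheaves. Combined with Zariski descent for higher Chow groups, this supplies the left vertical equality $\CH_0(X,q;\Z/p) = H^{2d-q}_{\cM,\et}(X_\et,\Z/p(d))$ in the corollary's diagram as well as an isomorphism with $H^{-q}(X_\et, K_{X,\log})$. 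The commutativity of the corollary's diagram is then inherited from the naturality of $i_{1*}$ with respect to the cycle-to-motivic comparison.

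Second, I would analyze the projection $K_{X,\log} \to K_X$ appearing as the bottom arrow of \eqref{CompDiag}. Since $K_{X,\log} = \Cone(C-1)[-1]$, this projection sits in the distinguished triangle
$$K_{X,\log} \to K_X \xrightarrow{C-1} K_X \to K_{X,\log}[1].$$
Hence injectivity of the induced map $H^{-q}(X_\et, K_{X,\log}) \to H^{-q}(X_\et, K_X)$ on étale hypercohomology is equivalent to surjectivity of $C-1$ on $H^{-q-1}(X_\et, K_X)$. Over $k=\bar k$ of characteristic $p$, this is a standard Artin--Schreier-type statement: $C-1$ is étale-locally surjective on each component of the residual complex $K_X$, producing a short exact sequence of étale sheaves, and the algebraically closed hypothesis ensures that the surjectivity propagates to the level of hypercohomology.

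Combining these two steps, the composition
$$\CH_0(X,q;\Z/p) \xrightarrow{i_{1*}} \CH_0(X\times\A^1 | M(X\times\{0\}),q;\Z/p) \xrightarrow{\alpha_*} H^{-q}(X_\et, K_X)$$
equals, under the identifications above, the injection $\CH_0(X,q;\Z/p) \simeq H^{-q}(X_\et, K_{X,\log}) \hookrightarrow H^{-q}(X_\et, K_X)$, which forces $i_{1*}$ to be injective. The analogous chase applied to the étale motivic cohomology row yields the injectivity of the bottom horizontal. The main obstacle is step two: verifying the surjectivity of $C-1$ on $H^{-q-1}(X_\et, K_X)$ requires careful use of the quasi-coherent structure of the residual complex $K_X$ and the étale-local Artin--Schreier property of the Cartier operator, and crucially uses the algebraically closed hypothesis on $k$. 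A secondary subtlety is the cycle-to-étale-motivic identification on the left column, which rests on combining Zariski descent for higher Chow with the quasi-isomorphism supplied by \cite{RenThesis}.
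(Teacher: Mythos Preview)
Your proposal is correct and follows essentially the same route as the paper: both arguments pass \eqref{CompDiag} to (hyper)cohomology, identify the left column via the quasi-isomorphism $\zeta\circ\varphi'$ of \cite{RenThesis}, and then deduce injectivity of both $i_{1*}$ maps from the injectivity of $H^{-q}(X,K_{X,\log})\to H^{-q}(X,K_X)$. One terminological slip: the identification $\CH_0(X,q;\Z/p)=H^{2d-q}_{\cM,\et}(X,\Z/p(d))$ is \emph{\'etale} descent for higher Chow groups (this is where $k=\bar k$ enters; the paper cites \cite[Prop.~8.8]{RenThesis}), not Zariski descent alone. Your Artin--Schreier explanation of the injectivity of $H^{-q}(K_{X,\log})\to H^{-q}(K_X)$ via surjectivity of $C-1$ on $H^{-q-1}(K_X)$ is exactly the content the paper blackboxes as \cite[Prop.~8.2]{RenThesis}; just be aware that making ``surjectivity propagates to hypercohomology'' rigorous uses that $K_X$ is a complex of quasi-coherent injectives (so \'etale and global-section cohomology agree) together with the $p^{-1}$-linear algebra of $C$ over an algebraically closed field.
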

\begin{proof}
Consider the following commutative diagram
(we omit the subscript ``${X\times \A^1 | M(X\times \{0\})}$'' from the right face due to formatting reasons)
$$\resizebox{\displaywidth}{!}{
\xymatrix{
&
R\Gamma(X_\et,\Z/p(d)_{X}[2d])
\ar[rr]^{i_{1*}}
\ar[dd]_(0.65){\zeta\circ \varphi'}
&&
R\Gamma(X_\et,\Z/p(d+1)[2d+2])
\ar[dd]^{\alpha}
\\
\Gamma(X,\Z/p(d)_{X}[2d])
\ar[ru]^{}
\ar[rr]^(0.27){i_{1*}}
\ar[dd]_{\zeta\circ \varphi'}
&&
\Gamma(X,\Z/p(d+1)[2d+2])
\ar[ur]
\ar[dd]^(0.35){\alpha}
\\
&
R\Gamma(X,K_{X,log})
\ar[rr]^(0.35){(a,b)\mapsto a}
&&R\Gamma(X,K_X)
\\
\Gamma(X,K_{X,log})
\ar[ur]^{\simeq}
\ar[rr]^{(a,b)\mapsto a}
&&\Gamma(X,K_X)
\ar[ur]^{\simeq}.
}}$$
The two ``$\simeq$'' in the diagram follow from the fact that both $K_{X,\log}$ and $K_X$ are complexes of injectives.
Taking the  $-q$-th cohomology of the whole diagram, 
we get
$$\resizebox{\displaywidth}{!}{
\xymatrix{
&
H^{2d-q}_{\cM,\et}(X,\Z/p(d))
\ar[rr]^{{i_{1*}},\circled{g}}
\ar[dd]_(0.65){\zeta\circ\phi',\circled{e}}
&&
H^{2d+2-q}_{\cM,\et}(X\times \A^1 | M(X\times \{0\}),\Z/p(d+1))
\ar[dd]^{\alpha}
\\
\CH_{0}(X,q;\Z/p)
\ar[ru]^{\simeq,\circled{a}}
\ar[rr]^(0.27){i_{1*},\circled{d}}
\ar[dd]_{\zeta\circ \varphi',\circled{b}}
&&
\CH_{0}(X\times \A^1 | M(X\times \{0\}),q;\Z/p)
\ar[ur]
\ar[dd]^(0.35){\alpha}
\\
&
H^{2d-q}(X,K_{X,log})
\ar[rr]^(0.3){(a,b)\mapsto a,\circled{f}}
&&
H^{2d-q}(X,K_X)
\\
H^{2d-q}(X,K_{X,log})
\ar[ur]^{\simeq}
\ar[rr]^{(a,b)\mapsto a,\circled{c}}
&&
H^{2d-q}(X,K_X)
\ar[ur]^{\simeq}.
}}$$
In this diagram,
\begin{itemize}
    \item 
    $\circled{a}$ is an isomorphism due to the \'etale descent for higher Chow groups when $k=\bar k$, see \cite[Proposition 8.8]{RenThesis}.
    \item 
    $\circled{b}, \circled{e}$ are isomorphisms by \cite[Theorem 6.1]{RenThesis}.
    \item 
    \circled{c} is an injection by \cite[Proposition 8.2]{RenThesis}.
    \item 
    The commutativity of the front face hence implies that $\circled{d}$ is injective.
    \item 
     The commutativity of the back face hence implies that $\circled{g}$ is injective (since $\circled{f}$ is injective).
\end{itemize}
\end{proof}

\bigskip
\textbf{Acknowledgments}.
The author thanks Kay R\"ulling for raising the question in the introduction and for related discussions.

\medskip
\printbibliography % bibliography style in thesis
%\bibliographystyle{amsalpha}S
%\bibliography{sample}

\bigskip 

% Bergische Universit\"at Wuppertal, 
% Gau\ss strasse 20, D-42119 Wuppertal, Germany

% \textit{Email address:} \url{renfei@uni-wuppertal.de
% }
\end{document}